\newtheorem{theorem}{Theorem}[section]
\newtheorem{lemma}[theorem]{Lemma}
\newtheorem{example}[theorem]{Example}
\newtheorem{corollary}[theorem]{Corollary}
\newtheorem{conjecture}[theorem]{Conjecture}
\theoremstyle{definition}
\newtheorem{remark}[theorem]{Remark}
\begin{document}

\title{Height, trunk and representativity of knots}

\author{Ryan Blair}
\address{Department of Mathematics, California State University, Long Beach, 1250 Bellflower Blvd, Long Beach, CA 90840}
\email{ryan.blair@csulb.edu}

\author{Makoto Ozawa}
\address{Department of Natural Sciences, Faculty of Arts and Sciences, Komazawa University, 1-23-1 Komazawa, Setagaya-ku, Tokyo, 154-8525, Japan}
\email{w3c@komazawa-u.ac.jp}
\thanks{The author is partially supported by Grant-in-Aid for Scientific Research (C) (No. 26400097 \& 17K05262), The Ministry of Education, Culture, Sports, Science and Technology, Japan}

\subjclass[2010]{Primary 57M25; Secondary 57M27}

\keywords{knot, height, trunk, representativity, waist}

\begin{abstract}
In this paper, we investigate three geometrical invariants of knots, the height, the trunk and the representativity.

First, we give a conterexample for the conjecture which states that the height is additive under connected sum of knots.
We also define the minimal height of a knot and give a potential example which has a gap between the height and the minimal height.

Next, we show that the representativity is bounded above by a half of the trunk.
We also define the trunk of a tangle and show that if a knot has an essential tangle decomposition, then the representativity is bounded above by half of the trunk of either of the two tangles.

Finally, we remark on the difference among Gabai's thin position, ordered thin position and minimal critical position.
We also give an example of a knot which bounds an essential non-orientable spanning surface, but has arbitrarily large representativity.
\end{abstract}

\maketitle


\section{Introduction}

We study a knot in the 3-sphere via a standard Morse function $h: S^3 \to \mathbb{R}$.
We derive two geometrical invariants of a knot, one is  ``height" from the vertical direction of $h$, and another is ``trunk" from the horizontal direction.

\subsection{Height of knots}\label{sub1}
It is often difficult to determine how geometrically defined knot invariants behave with respect to connected sum. Some classical invariants are known to be predictably well-behaved such as genus and bridge number \cite{S}. While others are only conjectured to be well-behaved such as crossing number and unknotting number. Still others have been shown to exhibit complicated behavior with respect to connected sum such as tunnel number \cite{Mori1} and width \cite{BT1}. In this paper we study the behavior of \emph{height} of a knot with respect to connected sum and show that this invariant best fits in the third category by demonstrating that height is not additive with respect to connected sum, giving a counterexample to Conjecture 3.5 of \cite{Ozawa1}.

Let $K$ be an ambient isotopy class of knot in $S^3$ and let $h: S^3 \to \mathbb{R}$ be the standard height function. If $\gamma$ is a smooth embedding of knot type $K$, $h|_{\gamma}$ is morse and all critical points of $h|_{\gamma}$ have distinct critical values, then we will write $\gamma\in K$. Though an abuse of notation, we will also let $\gamma$ denoted the image of the embedding.

Then the {\em bridge number} of $\gamma\in K$ is denoted $\beta(\gamma)$ and is defined to be the number of maxima of $h|_{\gamma}$. The {\em bridge number} of $K$, $\beta(K)$, is the minimum of $\beta(\gamma)$ over all $\gamma \in K$. Schubert showed that the bridge number of a connected sum $K_1\# K_2$ always satieties the equality
\[
\beta(K_1\# K_2)=\beta(K_1)+\beta(K_2)-1.
\]

Bridge number is closely related to the width of a knot which was originally defined by Gabai and used in the proof of the property R conjecture \cite{Gabai1}. To define width, we first need some additional structure. If $t$ is a regular value of $h|_{\gamma}$, then $h^{-1}(t)$ is called a {\em level
sphere} with width $w(h^{-1}(t))=|\gamma \cap h^{-1}(t)|$. If $c_{0}<c_{1}<...<c_{n}$ are all the critical values of
$h|_{\gamma}$, choose regular values $r_{1},r_{2},...,r_{n}$ such that $c_{i-1}<r_{i}<c_{i}$. Then the {\em width of $\gamma$} is defined by $w(\gamma)=\sum w(h^{-1}(r_{i}))$. The {\em width} of $K$, $w(K)$, is the minimum
of $w(\gamma)$ over all $\gamma \in K$. We say that $\gamma \in K$ is a {\em thin position} for $K$ if $w(\gamma)=w(K)$ and write $\gamma\in TP(K)$ where $TP(K)$ denotes the set of all thin positions of $K$.

Based in part on Schubert's equality, it was widely conjectured that the width of a connected sum $K_1\# K_2$ always satieties the equality
\[
w(K_1\# K_2)=w(K_1)+w(K_2)-2.
\]
Rieck and Sedgwick made progress on this conjecture when they showed that the above equality always holds when $K_1$ and $K_2$ are mp-small knots \cite{RS}. Additionally, Scharlemann and Schultens showed that $w(K_1\# K_2)\geq \max\{w(K_1),w(K_2)\}$ \cite{SchSch}. However, Scharlemann and Thompson proposed counterexamples to the equality in \cite{SchTh} and Blair and Tomova proved that an infinite class of the Scharlemann-Thompson examples were counterexamples \cite{BT1}. However, there are alternative definitions of width for which width is well-behaved with respect to connected sum \cite{TT1}.  In general, the best known inequalities for $w(K_1\# K_2)$ are
\[
\max\{w(K_1),w(K_2)\}\leq w(K_1\# K_2) \leq w(K_1)+w(K_2)-2
\]
with each of these individual inequalities known to be equalities for certain choices of $K_1$ and $K_2$.

To define the height of a knot, we first need to introduce the notion of thick and thin level. A level sphere $h^{-1}(t)$ for $\gamma\in K$ is called {\em thin} if the highest critical point for $\gamma$ below it is a maximum and
the lowest critical point above it is a minimum. If the highest critical point for $\gamma$ below $h^{-1}(t)$ is a
minimum and the lowest critical point above it is a maximum, the level sphere is called {\em thick}. As the lowest
critical point of $K$ is a minimum and the highest is a maximum, a thick level sphere can always be found. Note that some embeddings will have no thin spheres. When this occurs the unique thick sphere is called a {\em bridge sphere} and the embedding is said to be a {\em bridge position} for $K$.

Given $\gamma \in K$ such that $\gamma$ is a thin position, we define the {\em height} of $\gamma$, denoted $ht(\gamma)$ to be the number of thick level spheres for $\gamma$. Similarly, the \emph{height} of a knot type $K$ is defined as
\[
\displaystyle ht(K)=\max_{\gamma\in TP(K)} ht(\gamma).
\]

Alternatively, we will define the \emph{min-height} of a knot type $K$ to be

\[
\displaystyle ht_{\min}(K)=\min_{\gamma\in TP(K)} ht(\gamma).
\]

Clearly, $ht_{\min}(K)\leq ht(K)$ for all knots $K$.

The basic properties of height have been studied in \cite{Ozawa2}.
Here, we would like to point out the relation between the height and essential planar surfaces.
Bachman proved in \cite{B} that if a knot or link has $n$ thin levels when put in thin position then its exterior contains a collection of $n$ disjoint, non-parallel, planar, meridional, essential surfaces.
Let $mp(K)$ denote the maximal number of disjoint, non-parallel, planar, meridional, essential surfaces in the exterior of $K$.
Then, we obtain the following inequality.

\begin{theorem}[\cite{B}, cf. \cite{HK1997}]\label{htep}
\[
ht(K)\le mp(K)+1
\]
\end{theorem}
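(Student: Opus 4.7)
The plan is to reduce the statement to a direct application of Bachman's theorem \cite{B}, which is already cited in the excerpt: once we can say that a thin position with $ht(K)$ thick spheres necessarily has $ht(K)-1$ thin spheres, Bachman supplies $ht(K)-1$ disjoint, non-parallel, planar, meridional, essential surfaces in the exterior of $K$, and the inequality $mp(K)\geq ht(K)-1$ follows by definition of $mp(K)$.

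So the first thing I would do is pick $\gamma\in TP(K)$ that realizes the maximum in the definition of $ht(K)$, so $\gamma$ is a thin position with exactly $ht(K)$ thick level spheres. Next I would verify the counting claim: along the height axis, thick and thin level spheres alternate, because the definition forces any thick sphere to sit directly above a region of minima and directly below a region of maxima, while any thin sphere sits directly above a region of maxima and directly below a region of minima. Since the globally lowest critical point of $\gamma$ is a minimum and the globally highest is a maximum, the alternating sequence begins and ends with a thick level sphere. Hence the number of thin level spheres is exactly one less than the number of thick ones, giving $ht(K)-1$ thin level spheres for $\gamma$. (The case $ht(K)=1$ corresponds to bridge position, where there are no thin spheres, consistent with $0$.)

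Finally I would invoke Bachman's theorem applied to $\gamma$: because $\gamma$ is in thin position and has $ht(K)-1$ thin levels, its exterior contains $ht(K)-1$ disjoint, non-parallel, planar, meridional, essential surfaces. By the definition of $mp(K)$ we conclude $mp(K)\geq ht(K)-1$, i.e., $ht(K)\leq mp(K)+1$.

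The only step that is not entirely bookkeeping is the alternation/counting step, and the potential obstacle there is a degenerate configuration (e.g., consecutive thick spheres with no thin sphere between them, or a thin sphere appearing at the top or bottom of the embedding). These are ruled out by the definitions: the lowest and highest critical points of a knot must be a minimum and a maximum respectively, so no thin sphere can sit above all maxima or below all minima, and between any two thick spheres the pattern of critical points forces a thin sphere to appear. Everything substantive in the inequality is carried by the already-cited theorem of Bachman, so no further work is needed beyond this counting argument and the citation.
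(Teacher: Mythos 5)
Your proposal is correct and is essentially the argument the paper intends: the paper derives the inequality directly from Bachman's theorem, with the (implicit) observation that a thin position with $ht(K)$ thick spheres has exactly $ht(K)-1$ thin spheres, so that $mp(K)\ge ht(K)-1$. Your explicit verification of the alternation/counting step fills in exactly the bookkeeping the paper leaves to the reader.
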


\begin{remark}
The inequality in Theorem \ref{htep} can take an arbitrarily gap generally.
For example, a knot $K$ in the left of Figure \ref{counterexample3} has $ht(K)=1$, but $mp(K)\ge 3$.
\end{remark}

We are interested in understanding how height behaves with respect to connected sum.
On the other hand, it holds that $mp(K_1\# K_2)\ge mp(K_1)+mp(K_2)+1$ and this inequality can take an arbitrarily gap generally.
Indeed let $K_i$ be a knot with $mp(K_i)=0$ for $i=1,2,3$.
Then $mp(K_1\# K_2)=mp(K_1)+mp(K_2)+1=1$ and 
$mp((K_1\# K_2)\# K_3)=3>mp(K_1\# K_2)+mp(K_3)+1=2$.

It was remarked in \cite{Ozawa1} that the height is additive with respect to connected sum for meridionally small knots (cf. \cite[Theorem 1.8]{O2}), and conjectured that for non-trivial knots $K_1$ and $K_2$, $ht(K_1\# K_2) = ht(K_1)+ht(K_2)$ always holds. By a similar argument, it follows that min-height is also additive with respect to connected sum for meridionally small knots. Hence, it is natural to ask if min-height is always additive with respect to connected sum. Our first results provide counterexamples to each of these conjectures by defining an infinite class of knots $\mathcal{K}$ such that for every $K\in \mathcal{K},$ $ht_{\min}(K)=ht(K)=3$ and the following Theorem holds.

\begin{theorem}\label{main}
Let $K\in \mathcal{K}$ and let $K_2$ be any two-bridge knot, then
$$
ht(K\# K_2)=ht(K)=3,
$$
$$
ht_{\min}(K\# K_2)=ht_{\min}(K)=3.
$$
\end{theorem}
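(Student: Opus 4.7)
My plan is to prove that every $\gamma\in TP(K\#K_2)$ has exactly three thick spheres; this simultaneously yields $ht(K\#K_2)\le 3$ and $ht_{\min}(K\#K_2)\ge 3$, and combined with a construction that realizes a thin position with three thick spheres it gives both equalities in the theorem. Throughout, I exploit that $K_2$, being two-bridge, is meridionally small (its exterior contains no essential meridional planar surface), so $w(K\#K_2)=w(K)+w(K_2)-2$ by the Rieck--Sedgwick additivity result cited in the introduction.

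For the realization, I would start from a thin position $\gamma_K\in TP(K)$ with $ht(\gamma_K)=3$ and form the connected sum by inserting a two-bridge model of $K_2$ locally along a strand of $\gamma_K$ passing through a chosen thick sphere, arranging the two new maxima of $K_2$ just below and the two new minima just above that thick sphere. This insertion does not introduce new thick or thin levels, so the resulting $\gamma$ has $ht(\gamma)=3$; a direct count shows $w(\gamma)=w(K)+w(K_2)-2=w(K\#K_2)$, hence $\gamma\in TP(K\#K_2)$. This gives $ht(K\#K_2)\ge 3$ and $ht_{\min}(K\#K_2)\le 3$.

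For the reverse inequalities, let $\gamma\in TP(K\#K_2)$ be arbitrary. The connected sum decomposition provides an essential meridional twice-punctured sphere $S_0$ in the exterior of $\gamma$. Using the thin-position technology behind Theorem \ref{htep} together with the meridional smallness of $K_2$, I would isotope $S_0$ to a horizontal meridional sphere $S$ sitting inside a thick region of $\gamma$. Then $S$ decomposes $\gamma$ into two tangles that cap off to embeddings $\hat\gamma_K$ of $K$ and $\hat\gamma_{K_2}$ of $K_2$, and every thick sphere of $\gamma$ corresponds to a thick sphere of one of these two pieces, with the thick sphere containing $S$ shared. Next, I would verify that $\hat\gamma_K\in TP(K)$ and $\hat\gamma_{K_2}\in TP(K_2)$, using the fact that any reduction in the width of either summand would contradict the minimality of $w(\gamma)$ via the additivity identity. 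Since $K_2$ is two-bridge, $\hat\gamma_{K_2}$ is a bridge position with exactly one thick sphere, and since $ht_{\min}(K)=ht(K)=3$, the embedding $\hat\gamma_K$ has exactly three thick spheres. Counting thick spheres and accounting for the one shared level gives $ht(\gamma)=3+1-1=3$.

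The main technical obstacle is the leveling step: isotoping the summing sphere $S_0$ to a horizontal meridional sphere lying in a thick region is not automatic for general connected sums, as the Blair--Tomova counterexamples to width additivity demonstrate. The argument must leverage both the meridional smallness of $K_2$ (to rule out the summing sphere being ``hidden'' behind other essential surfaces) and the specific structural features of the class $\mathcal{K}$, which I expect are designed so that every thin position of $K$ places any essential meridional sphere in a controlled position relative to the height function. A secondary, more routine, obstacle is ensuring that after capping off at $S$, the embeddings $\hat\gamma_K$ and $\hat\gamma_{K_2}$ are actually thin positions of the respective summands, rather than merely some embedding --- this should follow from width additivity applied separately to each summand together with the minimality of $w(\gamma)$.
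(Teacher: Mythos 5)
Your argument rests on a false premise. You invoke Rieck--Sedgwick to conclude $w(K\# K_2)=w(K)+w(K_2)-2$, but that theorem requires \emph{both} summands to be mp-small, and the knots $K\in\mathcal{K}$ are emphatically not: they are the Blair--Tomova knots used to disprove width additivity, and every thin position of such a $K$ has two thin levels, so the exterior of $K$ contains essential meridional planar surfaces. In fact the correct value is $w(K\# K_2)=w(K)=134$, strictly smaller than $w(K)+w(K_2)-2=134+8-2=140$: the inequality $w(K\# K_2)\le w(K)$ comes from the embedding of $K\# K_2$ in Figure \ref{counterexample}, in which the two-bridge summand is absorbed without increasing width, and $w(K\# K_2)\ge w(K)$ is Corollary 6.4 of Scharlemann--Schultens. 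Consequently your explicit embedding $\gamma$, built by locally inserting a two-bridge ball near a thick sphere of a thin position of $K$, has width $140>w(K\# K_2)$ and is \emph{not} a thin position of $K\# K_2$; it therefore establishes neither $ht(K\# K_2)\ge 3$ nor $ht_{\min}(K\# K_2)\le 3$.

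The second half of your proposal inherits the same defect and adds an unproved leveling step. Splitting an arbitrary $\gamma\in TP(K\# K_2)$ along a leveled summing sphere into thin positions of $K$ and of $K_2$ whose widths add is exactly the statement that fails for these knots (it would force $w(K\# K_2)=140$), so the ``main technical obstacle'' you flag is not a technicality but the actual obstruction, and no choice of hypotheses on $K_2$ alone can repair it. The paper's proof avoids any decomposition along the summing sphere: having established $w(K\# K_2)=w(K)$ as above, it regards $K\# K_2$ as a satellite with pattern $K$ inside the swallow-follow solid torus (swallowing $K$, following $K_2$) and applies the height-preserving reimbedding of Scharlemann--Schultens (Theorem \ref{reimbed}, packaged as Theorem \ref{htineq}): any thin position of $K\# K_2$ is carried to an embedding of $K$ with the same width and the same number of thick spheres, and this image is a thin position of $K$ precisely because $w(K\# K_2)=w(K)$. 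This yields $ht(K\# K_2)\le ht(K)=3$ and $ht_{\min}(K\# K_2)\ge ht_{\min}(K)=3$, which together with the explicit thin position of Figure \ref{counterexample} (three thick levels) gives the theorem. To salvage your approach you would need to replace the additivity input by the equality $w(K\# K_2)=w(K)$ and by some analogue of this reimbedding argument rather than a summand-by-summand decomposition.
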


Since the height of any two-bridge knot is one, Theorem \ref{main} gives a counterexample to Conjecture 3.5 of \cite{Ozawa1} that for all knots $K_1$ and $K_2$, $ht(K_1 \# K_2)=ht(K_1)+ht(K_2)$.
By Theorem \ref{main}, the additivity of height does not hold with respect to connected sum of knots.
At this stage, we expect the following.

\begin{conjecture}\label{height}
For any two knots $K_1, K_2$, it holds that 
\[
\max\{ht(K_1), ht(K_2) \} \le ht(K_1\# K_2) \le ht(K_1) + ht(K_2),
\]
\[
\max\{ht_{\min}(K_1), ht_{\min}(K_2) \} \le ht_{\min}(K_1\# K_2) \le ht_{\min}(K_1) + ht_{\min}(K_2).
\]
\end{conjecture}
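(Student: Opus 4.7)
The plan is to treat the upper and lower bounds separately, since they call for a construction versus a decomposition argument. For both upper bounds, I would attempt a direct stacking construction: given embeddings $\gamma_i \in TP(K_i)$ realizing the relevant extremum (maximum height for $ht$, minimum height for $ht_{\min}$), place $\gamma_1$ strictly above $\gamma_2$ with respect to $h$ and perform the connected sum along a short vertical arc joining a minimum of $\gamma_1$ to a maximum of $\gamma_2$. The resulting embedding $\gamma \in K_1 \# K_2$ has width $w(\gamma_1)+w(\gamma_2)-2$ and exactly $ht(\gamma_1)+ht(\gamma_2)$ thick spheres, since the summing arc introduces a matched maximum/minimum pair without creating any new thick level.

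For the lower bounds I would start from $\gamma \in TP(K_1\# K_2)$ realizing the appropriate extremum and exploit the essential summing $2$-sphere $P$ in the exterior of $K_1\# K_2$. Using the machinery behind Theorem \ref{htep} and the analysis of essential meridional spheres in thin position, I would isotope $P$ so that it either lies between adjacent thick spheres of $\gamma$ or is transverse to $\gamma$ in two points at a single thick level. In the former case $P$ splits $\gamma$ into embeddings $\gamma_1, \gamma_2$ of $K_1, K_2$ each with a single bridge along $P$, so $ht(\gamma) = ht(\gamma_1)+ht(\gamma_2) \ge \max\{ht(\gamma_1),ht(\gamma_2)\}$; this would bound $ht(K_1\# K_2)$ below by $\max\{ht(K_1),ht(K_2)\}$ provided the induced $\gamma_i$ can be shown to be thin positions of $K_i$, and analogously for $ht_{\min}$.

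The main obstacle in both directions is the failure of Schubert-type additivity for width established in \cite{BT1}: the stacked embedding in the upper-bound construction need not be a thin position of $K_1\# K_2$, and conversely the induced $\gamma_i$ in the lower-bound decomposition need not be thin positions of $K_i$. Consequently, one must show that any thinning moves required to pass from these auxiliary embeddings to genuine thin positions do not change the number of thick spheres in the wrong direction: they must not decrease it for the max-height bound, nor increase it for the min-height bound. I would try to control this using Bachman's collection of disjoint essential planar meridional surfaces together with careful bookkeeping of which thick/thin levels can be created or destroyed by an untelescoping or levelling move. This is precisely the step where the cancellation phenomenon underlying Theorem \ref{main} lives, and it is where I expect the conjecture to be genuinely hard: quantifying how many thick levels can be absorbed when passing to a strictly narrower presentation seems to require techniques beyond the standard thin-position toolkit.
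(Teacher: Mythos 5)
The statement you are addressing is Conjecture \ref{height} of the paper: it is stated as an open problem, motivated by the counterexample to full additivity in Theorem \ref{main}, and the paper offers no proof of it. So there is no argument of the authors to compare yours against, and your text is a program rather than a proof --- indeed you concede yourself that the decisive step (controlling how many thick levels are created or destroyed when passing to an actual thin position) is beyond the standard thin-position toolkit. That concession is exactly where the conjecture lives, so the proposal as written does not establish any of the four inequalities.

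Beyond that, the way you have paired the two techniques with the two bounds is partly inverted, because $ht$ is a maximum over $TP(K)$ while $ht_{\min}$ is a minimum. A stacking construction produces one particular embedding of $K_1\# K_2$; even if you could show it lies in $TP(K_1\# K_2)$, that would only give an upper bound for $ht_{\min}$ and a \emph{lower} bound for $ht$ --- it can never prove $ht(K_1\# K_2)\le ht(K_1)+ht(K_2)$, which requires bounding the number of thick spheres of an \emph{arbitrary} thin position of the sum. Moreover, by \cite{BT1} the stacked embedding is in general not thin (its width $w(K_1)+w(K_2)-2$ can exceed $w(K_1\# K_2)$), so even the $ht_{\min}$ upper bound does not follow. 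Symmetrically, your lower-bound argument needs the induced embeddings $\gamma_i$ to be thin positions of $K_i$ realizing (or at least not undershooting) $ht(K_i)$, and it needs the summing sphere to be isotoped to a level position with respect to a thin presentation of the sum; both steps are precisely what \cite{RS} can do only for mp-small summands and what the examples of \cite{BT1} obstruct in general. Appealing to Bachman's disjoint essential meridional planar surfaces gives the inequality of Theorem \ref{htep}, but $mp$ is itself badly behaved under connected sum (the paper notes $mp(K_1\# K_2)\ge mp(K_1)+mp(K_2)+1$ with arbitrarily large gap), so no bound on heights of connected sums follows from it without substantial new input.
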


For a knot $K$ given in Theorem \ref{main}, we have $ht_{\min}(K)=ht(K)$.
But it is natural to think that the gap between the min-height and the height can be taken arbitrarily large in general.

\begin{conjecture}\label{gap}
There exists a knot $K$ such that $ht_{\min}(K)<ht(K)$.
\end{conjecture}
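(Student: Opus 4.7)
The plan is to exhibit a knot $K$ together with two embeddings $\gamma_{1}, \gamma_{2} \in TP(K)$ that both realize the minimum width $w(K)$ but have different thick-sphere counts, so that $ht_{\min}(K) \le ht(\gamma_{2}) < ht(\gamma_{1}) \le ht(K)$. The difficulty, and the reason this remains a conjecture, is that one must certify two different embeddings both minimize width simultaneously, and sharp lower bounds on $w(K)$ are notoriously hard to obtain.

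The first step is purely combinatorial. Viewing the ordered critical values of $h|_{\gamma}$ as a word in the letters $m$ (minimum) and $M$ (maximum), the width equals $2 \sum_{i=1}^{2n-1} y_{i}$, where $y_{i}$ is the partial excess of $m$'s over $M$'s in the length-$i$ prefix, and the height equals the number of maximal $m$-blocks. One checks that for $n \le 4$, whenever two valid Dyck-style words share the sum $\sum y_{i}$ they also share the $m$-block count, so no combinatorial separation is available. For $n = 5$, however, the words \texttt{mmMmmmMMMM} and \texttt{mmmMmMmMMM} both sum to $19$ (width $38$) while having $2$ and $3$ $m$-blocks respectively. The search thus focuses on $5$-bridge knots $K$ with $w(K) = 38$ admitting both of these critical-value patterns as actual thin positions.

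To find such a $K$, I would look among three families where width has been studied. First, composite knots $K_{1} \# \cdots \# K_{r}$, whose swallow-follow structure often allows independent rearrangement of summands without changing the overall width; the class $\mathcal{K}$ of Theorem \ref{main} and the Blair-Tomova family \cite{BT1} are natural candidates. Second, mp-small $5$-bridge knots, for which results of Rieck--Sedgwick \cite{RS} pin $w(K)$ down exactly (with the caveat that an irreducible example is needed, since height is additive in the mp-small regime). Third, knots with two essentially different essential tangle decompositions whose induced thin positions naturally realize distinct heights.

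The main obstacle is certifying $w(\gamma_{1}) = w(\gamma_{2}) = w(K)$. The upper bound is immediate. The lower bound requires excluding all combinatorially narrower orderings --- for $n = 5$ the narrowest valid word is \texttt{mmMmMmMmMM} of width $26$, and one must also rule out embeddings of smaller bridge number --- and this demands the sort of sweepout or Rubinstein--Scharlemann graphic argument used in \cite{BT1}. A successful candidate is likely to be identified by some additional geometric obstruction (an essential surface, a non-trivial JSJ piece, or a prescribed tangle structure) that both forces the two target orderings to be thin and excludes all narrower competitors.
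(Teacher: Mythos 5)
There is a genuine gap: what you have written is a research plan, not a proof. The statement is in fact left as a conjecture in the paper, which offers only a candidate knot (Figure \ref{counterexample4}) with two explicit embeddings $\gamma,\gamma'$ of equal width $242$, one a bridge position (one thick sphere) and one with two thick spheres; the missing step there is exactly the one you identify, namely certifying $w(K)=242$, i.e.\ that both embeddings lie in $TP(K)$. Your combinatorial preamble is correct as far as it goes (for $5$ maxima the patterns $mmMmmmMMMM$ and $mmmMmMmMMM$ both have width $38$ with $2$ and $3$ thick spheres, and no such collision exists for fewer maxima), but it produces no knot and no lower bound on width, so it establishes nothing beyond the already-known shape of the problem.

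Two of your three proposed search families are moreover provably hopeless, so the plan as stated would fail there. For meridionally small (mp-small) knots, Thompson's theorem \cite{T1997} says every thin position is a bridge position, so $ht_{\min}(K)=ht(K)=1$ for all such knots regardless of bridge number or primeness; the Rieck--Sedgwick results \cite{RS} cannot rescue this. And for the family $\mathcal{K}$ used in Theorem \ref{main}, Theorems 12.1 and 12.2 of \cite{BT1} show that \emph{every} thin position has exactly three thick levels of width $10$, so $ht_{\min}=ht=3$ there as well; the rigidity that makes those knots useful for the connected-sum counterexample is precisely what prevents them from separating $ht_{\min}$ from $ht$. Any viable approach must instead certify two width-minimizing embeddings with different thick-sphere counts for a single knot (as in the paper's candidate $\gamma,\gamma'$ with widths $\frac{1}{2}(22)^2$ and $\frac{1}{2}(18)^2+\frac{1}{2}(14)^2-\frac{1}{2}(6)^2$), and the hard analytic input --- a sharp lower bound on $w(K)$ of the Blair--Tomova type --- is exactly what neither you nor the paper supplies.
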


We give a potential counterexample $K$ for Conjecture \ref{gap} in Figure \ref{counterexample4}.
The width of the embedding $\gamma\in K$ on the left is $\frac{1}{2}(22)^2=242$ while the width of the embedding on the right $\gamma'\in K$ is $\frac{1}{2}(18)^2+\frac{1}{2}(14)^2-\frac{1}{2}(6)^2=242$.

\begin{figure}[htbp]
	\begin{center}
	\begin{tabular}{cc}
	\includegraphics[trim=0mm 0mm 0mm 0mm, width=.35\linewidth]{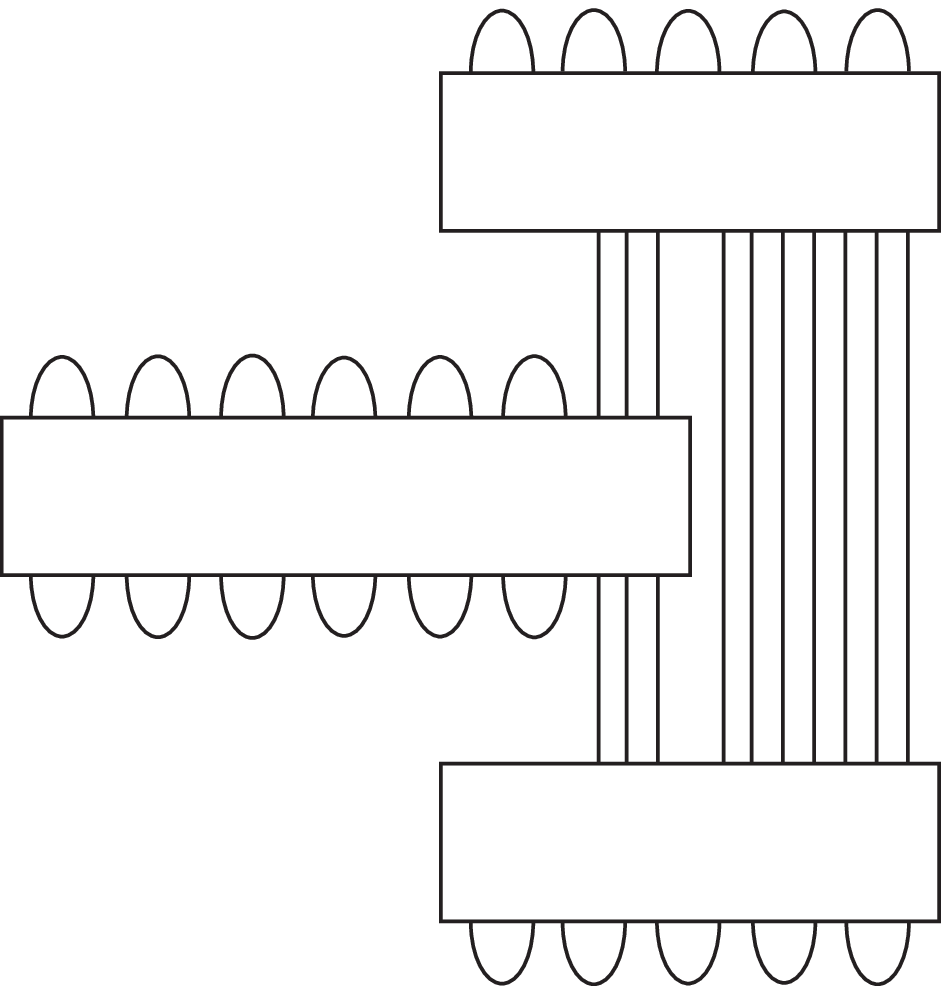}&
	\includegraphics[trim=0mm 0mm 0mm 0mm, width=.45\linewidth]{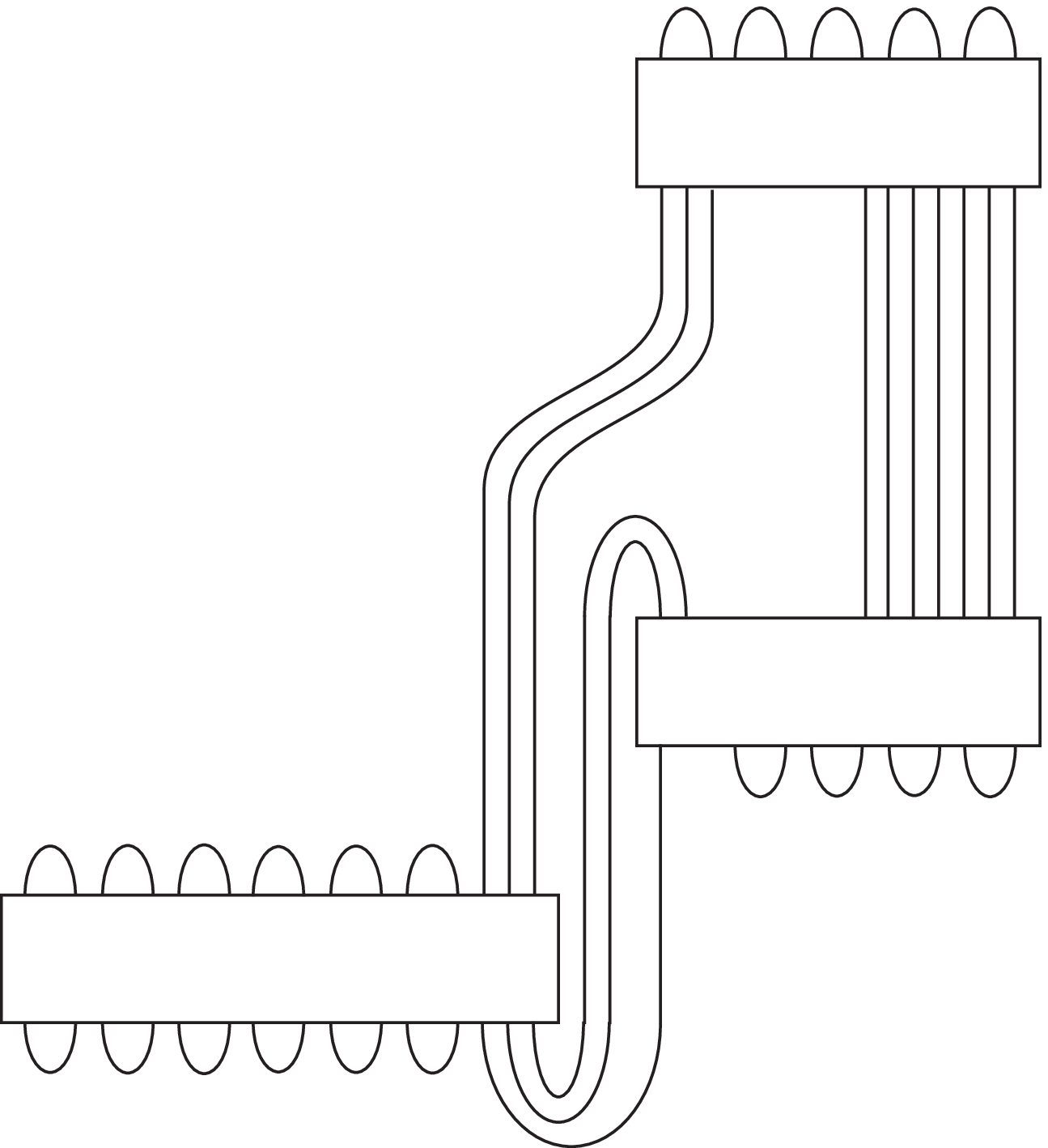}\\
	$\gamma$ & $\gamma'$
	\end{tabular}
	\end{center}
	\caption{A potential counterexample for Conjecture \ref{gap}, where $w(\gamma)=w(\gamma')=242$.}
	\label{counterexample4}
\end{figure}

\subsection{Representativity and trunk of knots}\label{sub2}
To measure the ``density" of a graph embedded in a closed surface, Robertson and Vitray introduced the {\em representativity} in \cite{RV} as the minimal number of points of intersection between the graph and any essential closed curve on the closed surface.
This concept was applied to a knot $K$ in the 3-sphere $S^3$ in \cite{O1} and extended to a spatial graph in the 3-sphere in \cite{O3}.
Let $F$ be a closed surface containing the knot $K$.
We define the {\em representativity} of a pair $(F,K)$ as
\[
\displaystyle r(F,K)=\min_{D\in\mathcal{D}_F} |\partial D\cap K|,
\]
where $\mathcal{D}_F$ denotes the set of all compressing disks for $F$.
Moreover, we define the {\em representativity} of a knot $K$ as
\[
r(K)=\max_{F\in\mathcal{F}} r(F,K),
\]
where $\mathcal{F}$ denotes the set of all closed surfaces containing $K$.

The representativity measures the ``spatial density" of a knot.
We summarize the known values of representativity of knots.
\begin{enumerate}
\item $r(K)=1$ if and only if $K$ is the trivial knot (\cite[Example 3.2]{O3}).
\item $r(K)=2$ for composite knots (\cite[Example 3.6]{O3})
\item $r(K)\le 2n$ for knots with essential $n$-string tangle decompositions (\cite[Example 3.6]{O3})
\item $r(K)=2$ for 2-bridge knots (\cite[Example 3.4]{O3})
\item $r(K) = \min\{p,q\}$ for $(p,q)$-torus knots (\cite[Example 3.3]{O3})
\item $r(K)\le 3$ for algebraic knots (\cite[Theorem 1.5]{O4} for large case. The small case follows \cite[Theorem 1.2]{O3} since small algebraic knots are Montesinos knots with length 3.)
\item For a $(p,q,r)$-pretzel knot $K$, $r(K)=3$ if and only if $(p, q, r) = \pm(-2, 3, 3)$ or $\pm(-2, 3, 5)$ (\cite{O4}).
\item $r(K)=2$ for alternating knots (\cite{K})
\item $r(K)=p$ for inconsistent cable knots with index $p$ (\cite{AKT})
\item $r(K)\le \beta(K)$ ({\cite[Theorem 1.2]{O3}}).
\item $r(K)\le 160 \delta(K)$, where $\delta(K)$ denotes the distortion of $K$ (\cite{P}).
\end{enumerate}

We remark that the inequality (10) was used to show the above (1), (4), (5), (6), (7).
In this paper, we refine the inequality (10).

As in \cite{O2}, we define the {\em trunk} of a knot $K$ as
\[
trunk(K)=\min_{\gamma\in K} \max_{t\in \mathbb{R}} |h^{-1}(t)\cap \gamma|,
\]
It follows by the definition that $trunk(K)\le 2\beta(K)$.

The bridge number of knots behaves as expected under taking connected sums, that is, Schubert proved that  $\beta(K_1\#K_2)=\beta(K_1)+\beta(K_2)-1$ (\cite{S}).
On the other hand, it was naturally expected that  $trunk(K_1\# K_2)=\max\{trunk(K_1),$ $trunk(K_2)\}$ (\cite[Conjecture 1.7]{O2}).
Davies and Zupan showed in \cite{DZ} that this is true, namely,
for two knots $K_1$ and $K_2$,
\[
trunk(K_1\# K_2)=\max\{trunk(K_1),\ trunk(K_2) \}
\]

In several cases, the trunk turned out to be useful.
For example, it was shown in \cite{T} that $m(K)\ge trunk(K)/2$, where $m(K)$ denotes the multiplicity index of $K$.
It was also shown in \cite{IPSSVAS} that a knot $K$ is embeddable into $(m\times n)$-tube if and only if $trunk(K)<(m+1)(n+1)$.

The following theorem refines {\cite[Theorem 1.2]{O3}}.

\begin{theorem}\label{rt}
For any knot $K$, we have
\[
\displaystyle r(K)\le \frac{trunk(K)}{2}.
\]
\end{theorem}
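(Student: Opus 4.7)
My plan is to couple a sphere realizing the trunk with an innermost-disk argument on the given surface. Choose an embedding $\gamma$ of $K$ attaining $trunk(K)=T$, and pick a regular level sphere $S=h^{-1}(t^*)$ with $|S\cap\gamma|=T$. Given any closed surface $F\supset\gamma$ admitting a compressing disk, I would first ambient-isotope $F$ rel $\gamma$ so that $F$ is transverse to $S$ and so that $|F\cap S|$ is as small as possible; the usual innermost-disk-on-$F$ surgery, performed in a neighborhood disjoint from $\gamma$, removes any circle of $F\cap S$ bounding a $\gamma$-free disk on $F$.

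Next I would extract a compressing disk from the level sphere. Since $K\subset F$ and $\mathrm{int}(D)\cap F=\emptyset$ for an innermost-on-$S$ disk $D\subset S$ bounded by $c\in F\cap S$, we have $D\cap K=c\cap K$; if moreover $c$ is essential on $F$, then $D$ is itself a compressing disk for $F$. The circles of $F\cap S$ cut the $2$-sphere $S$ into regions whose adjacency graph is a tree, so there are at least two leaf regions, each an innermost-on-$S$ disk, and
\[
\sum_{c\in F\cap S}|c\cap K|=|S\cap\gamma|=T.
\]
Hence, if I can exhibit two innermost-on-$S$ disks with essential boundaries on two distinct circles $c_1\ne c_2$, then by pigeonhole $\min(|c_1\cap K|,|c_2\cap K|)\le T/2$, and the corresponding innermost disk gives the required compressing disk for $F$.

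The step I expect to be the main obstacle is ensuring that such a pair of innermost disks exists. The problematic degenerate cases are (a) $F\cap S$ is a single circle $c$ essential on $F$, in which case the two innermost-on-$S$ disks share boundary $c$ and the averaging only yields $T$; and (b) every innermost-on-$S$ circle is inessential on $F$. In case (a), $c$ is forced to be separating on $F$ (because $F\setminus c$ already splits as $F\cap B_+$ and $F\cap B_-$, where $B_\pm$ are the balls bounded by $S$), and the two innermost disks $D_\pm\subset B_\pm$ then compress $F$ from opposite components of $S^3\setminus F$; using the hypothesis that $F$ has a compressing disk $D^\ast$, I would run an outermost-arc analysis on $D^\ast\cap S$ to produce a bridge disk on one side of $S$ and combine it with the arcs of $\gamma\cap B_\pm$ to construct a compressing disk for $F$ whose boundary meets $\gamma$ in at most $T/2$ points. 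Case (b) I would treat by pairing each innermost-on-$S$ disk $D$ with the innermost-on-$F$ disk $E$ it bounds to form a $2$-sphere $D\cup E$; the minimality of $|F\cap S|$ forces $E\cap\gamma\ne\emptyset$, and a parity count on the $\gamma$-arcs inside $E$ reduces the analysis either back to case (a) or to the favorable case of two distinct essential innermost circles, at which point the pigeonhole argument closes the proof.
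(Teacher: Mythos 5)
Your core counting step---two disjoint disks in a level sphere whose boundaries are essential in $F$, a pigeonhole to select the one whose boundary meets $K$ at most half as often, then cut-and-paste to obtain an honest compressing disk---is exactly the engine of the paper's proof. The gap is where you yourself expect it: you fix the level sphere $S$ in advance (a maximal-width level of a trunk-realizing embedding) and must then dispose, at that particular level, of the degenerate configurations (a) and (b), and your proposed treatments of them are not proofs. In case (a), since $K\subset F$, all $T$ points of $K\cap S$ lie on the single circle $c=F\cap S$, so both innermost disks have boundary $c$ and the count only yields $r(F,K)\le T$; the suggested repair---an outermost-arc analysis of $D^*\cap S$ producing a bridge disk to be ``combined with the arcs of $\gamma\cap B_\pm$''---offers no mechanism for producing a compressing disk for $F$ (a bridge disk's boundary is not a loop on $F$), and nothing in the sketch controls the resulting count by $T/2$. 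Case (b)'s ``parity count'' is likewise unspecified; note also that genuinely surgering $F$ (as opposed to isotoping it rel $K$) to reduce $|F\cap S|$ replaces $F$ by a different surface and is not legitimate without further argument (although if $F$ ever admits a compressing disk disjoint from $K$ you are trivially done, since then $r(F,K)=0$). Finally, the trivial knot needs a separate word, as the paper gives it.

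The paper sidesteps the degenerate cases entirely by choosing the level sphere according to $F$ rather than according to the width of $\gamma$: after handling the unknot separately ($r(K)=1$, $trunk(K)=2$), the surface $F$ has positive genus, and Morse theory applied to $h|_F$ yields a regular value $t$ such that $h^{-1}(t)\cap F$ contains at least two loops $l_1, l_2$ essential in $F$. Two disjoint loops in a sphere always bound disjoint disks $D_1, D_2\subset h^{-1}(t)$, so $|\partial D_1\cap K|+|\partial D_2\cap K|\le |h^{-1}(t)\cap K|\le \max_{s}|h^{-1}(s)\cap \gamma|$, and the pigeonhole plus cut-and-paste then runs with no exceptional cases; minimizing over $\gamma\in K$ and maximizing over $F$ gives the theorem. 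If you want to salvage your fixed-level strategy, the missing ingredient is precisely a lemma of this kind guaranteeing two disjoint $F$-essential intersection curves at \emph{some} level (not necessarily the widest one---any level's intersection with $\gamma$ is already bounded by the trunk), rather than an ad hoc analysis of the bad configurations at the widest level.
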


In the following, we introduce a ``local trunk" of a knot, that is, the trunk of a tangle which lies in the pair of the 3-sphere and a knot.

Let $(B,T)$ be a tangle, where $B$ is a 3-ball and $T$ is proper ambient isotopy class of properly embedded arcs in $B$.
Let $h:B\to \mathbb{R}$ be a standard Morse function with a single maximal point $p$ and $B-p\cong \partial B\times (1,0]$.

If $\gamma$ is a smooth embedding in the proper ambient isotopy class of $T$, $h|_{\gamma}$ is morse and all critical points of $h|_{\gamma}$ in the interior of $\gamma$ have distinct critical values, then we will write $\gamma\in T$.

We define the {\em trunk} of a tangle $(B,T)$ as
\[
trunk(B,T)=\min_{\gamma\in T} \max_{t\in \mathbb{R}} |h^{-1}(t)\cap \gamma|.
\]

Then we obtain the next theorem which is a local version of Theorem \ref{rt}.

\begin{theorem}\label{lrt}
Let $K$ be a knot admitting an essential tangle decomposition $(S^3,K)=(B_1,T_1)\cup(B_2,T_2)$.
Then we have
\[
\displaystyle r(K)\le \frac{\min\{ trunk(B_1,T_1), trunk(B_2,T_2) \}}{2}.
\]
\end{theorem}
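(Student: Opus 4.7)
The plan is to localize the proof of Theorem \ref{rt} to the ball $B_1$, using the essentiality of the tangle decomposition to constrain how $F$ sits. Without loss of generality assume $trunk(B_1,T_1) \le trunk(B_2,T_2)$, and write $2n = trunk(B_1,T_1)$. Let $F$ be a closed surface containing $K$ with $r(F,K)=r(K)$; the goal is to exhibit a compressing disk $D$ for $F$ with $|\partial D \cap K|\le n$.

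First, I would normalize $F$ against the decomposing sphere $\Sigma = \partial B_1 = \partial B_2$. Since the tangle decomposition is essential, $\Sigma\setminus K$ is incompressible in $S^3\setminus K$, so by standard innermost-disk arguments one may isotope $F$ so that every component of $F\cap\Sigma$ is essential in $\Sigma\setminus K$. If at some stage an innermost curve of $F\cap\Sigma$ on $\Sigma$ bounds a disk in $\Sigma\setminus K$ that is essential on $F\setminus K$, that disk is already a compressing disk for $F$ meeting $K$ in zero points and the theorem is immediate; otherwise, such innermost curves are inessential on $F$ as well, and $|F\cap\Sigma|$ can be reduced by isotopy.

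Second, I choose a representative $\gamma\in T_1$ realizing $trunk(B_1,T_1)$ and a level sphere $P = h^{-1}(t_0)\subset B_1$ with $|P\cap\gamma|=2n$, so every level sphere in $B_1$ meets $\gamma$ in at most $2n$ points. I would then adapt the level-sphere argument behind Theorem \ref{rt}, now with $P$ playing the role of a maximum-intersection level sphere inside $B_1$. After isotoping $F\cap B_1$ (keeping $F\cap\Sigma$ fixed) to simplify $F\cap P$, pick a simple closed curve $c$ of $F\cap P$ that is innermost on $P$ among curves essential in $F\setminus K$; such a curve must exist because $K\subset F$ meets $P$, so $F\cap P$ is nonempty, and any entirely inessential configuration can be pushed away. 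The curve $c$ divides $P$ into two disks whose $K$-intersections sum to $2n$, so one of them, say $E$, has $|E\cap K|\le n$. Surger $E$ along innermost subdisks cut off by $F\cap E$ that bound disks on $F$: since those disks lie on $F$ and $K\subset F$, pushing them off $F$ moves them off $K$, so the surgery does not increase the count of $K$-intersections. The resulting disk $D$ satisfies $D\cap F = \partial D = c$ and $|D\cap K|\le|E\cap K|\le n$, making $D$ a compressing disk for $F$ with $|\partial D\cap K|\le n$.

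The main obstacle is the third step: one has to handle arcs of $F\cap P$ with endpoints on $K\cap P$ alongside the closed curves, and verify that $c$ remains essential on $F\setminus K$ after the surgeries (so that $D$ genuinely compresses). Essentiality of $\Sigma$ is what allows us to restrict attention to $B_1$ without losing information, and the key numerical input is the elementary observation that an essential simple closed curve on the $2n$-punctured sphere $P\setminus K$ separates it into two components, one of which contains at most $n$ of the punctures.
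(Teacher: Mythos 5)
Your overall plan (localize the argument of Theorem \ref{rt} to $B_1$, normalize $F$ against the decomposing sphere, pigeonhole on a level sphere, then cut-and-paste) is the right flavor, but there is a genuine gap at the central step. You fix the level sphere $P$ in advance to be one realizing $|P\cap\gamma|=2n$ and then assert that $F\cap P$ must contain a curve essential in $F\setminus K$ because $F\cap P\neq\emptyset$ and ``any entirely inessential configuration can be pushed away.'' This is unjustified and false in general: since $K\subset F$ crosses $P$ you indeed cannot push $F$ off $P$, but every circle of $F\cap P$ may nevertheless be inessential in $F$ (for instance when $F$ only sends small disk ``fingers'' through $P$ around the strands of $K$), and nothing forces the essential circles of $F$ to appear at the level where $|h^{-1}(t)\cap\gamma|$ is maximal. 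The paper does not prescribe the level; it first isotopes $F$ so that every loop of $F\cap S$ is essential in $F$ (using essentiality of $S$ and the reduction to $r(F,K)\ge 2$), and then argues a dichotomy: if $F\cap S$ is a single essential loop, each $F\cap B_i$ has positive genus (a genus-zero piece with one boundary circle would be a disk, contradicting essentiality of the loop), and a Morse/sweep-out argument inside $B_i$ produces \emph{some} regular level meeting $F$ in at least two loops essential in $F$; if $F\cap S$ has at least two essential loops, a level just inside $\partial B_i$ already does. Only afterwards is $\max_t|h_i^{-1}(t)\cap T_i|$, hence the trunk, used as an upper bound for that level. This existence step is the heart of the theorem and is missing from your argument.

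A second problem is the single-curve pigeonhole. The quantity to bound is $|\partial D\cap K|$, which after arranging $D\cap F=\partial D$ equals $|c\cap K|$, not $|E\cap K|$. If $c$ is disjoint from $K$, your count is vacuous, but then you must know that $c$ is essential in $F$, not merely in $F\setminus K$: a curve essential in $F\setminus K$ can bound a disk in $F$ containing all of $K$, in which case $D$ is not a compressing disk for $F$ (one can rule this out using non-triviality of $K$, but you do not). If instead $c$ meets $K$, ``essential in $F\setminus K$'' is not defined for it, and the two closed sides of $c$ share the points of $c\cap K$, so neither side need contain at most $n$ of the $2n$ points; in the extreme all of $K\cap P$ lies on $c$. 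The paper's device of taking \emph{two} loops that are essential in $F$ and innermost among such loops avoids both issues: their innermost disks are disjoint, the count $|\partial D_1\cap K|+|\partial D_2\cap K|\le|h_i^{-1}(t)\cap T_i|$ is taken on the boundaries, and since all circles inside the chosen disk are inessential in $F$, the cut-and-paste leaves the boundary fixed and essential in $F$. (Also note that since $F$ is closed, $F\cap P$ consists of circles only; the ``arcs'' you worry about are components of $(F\cap P)\setminus K$, which the boundary-count formulation sidesteps.) Your surgery step must likewise address what happens when an innermost circle of $F\cap E$ is essential in $F$, which your choice of $E$ as the small-intersection (rather than innermost) side does not preclude.
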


In some cases, Theorem \ref{lrt} is more useful than Theorem \ref{rt} and (3) in Subsection \ref{sub2}.
Indeed, we can reprove (6) above after Theorem \ref{lrt}.

\begin{corollary}[{\cite[Theorem 1.5]{O4}}]
For a large algebraic knot $K$, $r(K)\le 3$.
\end{corollary}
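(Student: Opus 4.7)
The plan is to apply Theorem \ref{lrt} to an essential tangle decomposition of $K$ inherited from its algebraic structure, after bounding the trunk of one piece by $6$. Since $K$ is a large algebraic knot, its exterior contains at least one essential Conway sphere $S$ appearing in the Bonahon--Siebenmann decomposition of $K$. Cutting $S^3$ along $S$ produces an essential tangle decomposition
\[
(S^3, K) = (B_1, T_1) \cup (B_2, T_2),
\]
in which each $(B_i, T_i)$ inherits the algebraic structure and is an algebraic $2$-string tangle.

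The key geometric step is to show that at least one of the $(B_i, T_i)$, say $(B_1, T_1)$, can be realized with $trunk(B_1, T_1) \le 6$. The cleanest case is when $S$ can be chosen adjacent to a rational leaf of the Bonahon--Siebenmann decomposition tree: then $(B_1, T_1)$ is a rational tangle and admits a $2$-bridge presentation in which each of its two strands has a single maximum for the radial Morse function $h:B_1\to\mathbb{R}$, so that every level sphere meets the tangle in at most $4$ points. Otherwise, when both pieces are non-rational algebraic tangles, one would layer the constituent rational sub-tangles of $(B_1, T_1)$ along the level spheres of $h$ so that at most one interior rational block is ``active'' at any level, contributing at most $4$ strands, while the connecting arcs to outer blocks contribute at most $2$ more, giving $trunk(B_1, T_1) \le 6$.

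Applying Theorem \ref{lrt} then yields
\[
r(K) \le \frac{\min\{ trunk(B_1, T_1), trunk(B_2, T_2) \}}{2} \le \frac{6}{2} = 3,
\]
completing the argument. The primary obstacle I anticipate is the layering construction in the non-rational case: making precise that any algebraic tangle admits an embedding realizing trunk at most $6$ requires an induction on the depth of the Bonahon--Siebenmann decomposition tree together with a careful choice of placement for each rational block, so that the contribution of the connecting arcs to level spheres is uniformly controlled. If one can instead always arrange for $(B_1, T_1)$ to be rational, the same argument gives the stronger bound $r(K) \le 2$.
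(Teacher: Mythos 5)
Your overall strategy (an essential Conway sphere plus Theorem \ref{lrt} plus a trunk bound of $6$ on one side) is the same as the paper's, but there are two genuine gaps in the execution. First, your ``cleanest case,'' in which one side $(B_1,T_1)$ of the decomposing sphere is a rational tangle, cannot occur: a rational tangle is a trivial tangle, so the four-punctured sphere $\partial B_1\setminus T_1$ is compressible in $B_1\setminus T_1$, and the decomposition is then not essential. Theorem \ref{lrt} requires an \emph{essential} tangle decomposition, so it simply does not apply in that case; in particular your closing remark that one could upgrade the bound to $r(K)\le 2$ by arranging a rational side is unfounded, since the hypothesis of the theorem fails exactly when that arrangement is possible.

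Second, in the remaining case your argument rests on the claim that an arbitrary (non-rational) algebraic tangle can be layered so that its trunk is at most $6$, and you acknowledge yourself that this layering is not justified. This is precisely the hard step: the number of ``connecting'' strands passing through a given level is not obviously bounded by $2$ once the Bonahon--Siebenmann tree branches, and the rel-boundary class of each rational block constrains how it can be positioned, so the claimed uniform bound needs a genuine induction that you have not supplied (and it is not needed). The paper's proof avoids this entirely by choosing the essential Conway sphere adjacent to an outermost edge of the arborescent decomposition, so that one side $(B_1,T_1)$ is a \emph{union of two rational tangles}; this keeps the decomposition essential, and for this specific tangle it is easy to exhibit a position showing $trunk(B_1,T_1)=6$, after which Theorem \ref{lrt} immediately gives $r(K)\le \frac{6}{2}=3$. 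That choice of sphere --- one side consisting of exactly two rational pieces summed together, rather than one rational piece or an arbitrary algebraic tangle --- is the missing idea in your write-up.
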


\begin{proof}
Let $K$ be a large algebraic knot (i.e. algebraic knot with an essential Conway sphere).
Then, $K$ admits an essential tangle decomposition $(S^3,K)=(B_1,T_1)\cup(B_2,T_2)$, where $(B_1,T_1)$ is a union of two rational tangles.
It is easy to see that $trunk(B_1,T_1)=6$.
By Theorem \ref{lrt}, we obtain $r(K)\le 3$.
\end{proof}

Theorem \ref{lrt} can be regarded as a local version of Theorem \ref{rt}, namely, a local property determines a global property.
Such results can be seen in Theorem 3.1 in \cite{BZ} which restates Theorem 4.4 in \cite{JT} for the bridge number, and in \cite{K2} and \cite{R} for determinants.


\section{Proof of theorems}

\subsection{Proof of Theorem \ref{main}}
In this subsection we utilize the results in \cite{SchSch} to give a lower bound on the height and min-height of some satellite knots.

The following theorem is Corollary 5.4 in \cite{SchSch}.

\begin{theorem}\label{reimbed}
Suppose $h: S^3 \to \mathbb{R}$ is the standard height function and $H\subset S^3$ is a handlebody for which horizontal circles of $\partial H$ with respect to $h$ constitute a complete collection of meridian disk boundaries. Then there is a reimbedding $f:H\rightarrow S^3$ so that
\begin{enumerate}
\item $h=h\circ f$ on $H$ and
\item $f(H)\cup (S^3\setminus f(H))$ is a Heegaard splitting of $S^3$.
\end{enumerate}
\end{theorem}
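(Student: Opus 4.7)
The plan is to use the horizontal meridian disks of $H$ to cut it open into a 3-ball, and then to rebuild $H$ by re-attaching 1-handles in a height-preserving, standardized way so that the reconstructed handlebody is unknottedly embedded in $S^3$.

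First, using the hypothesis, I would select $g$ horizontal circles on $\partial H$ that form a complete collection of meridian boundaries, and for each such circle choose a meridian disk $D_i$ in $H$ with that boundary. After a small isotopy of $H$ supported near $\bigcup D_i$ and fixing $\partial H$ pointwise, I may assume each $D_i$ lies in the level sphere $h^{-1}(t_i)$ where $t_i = h(\partial D_i)$. Cutting $H$ along $\mathcal{D} = D_1 \cup \cdots \cup D_g$ produces a 3-ball $B \subset S^3$ whose boundary $\partial B$ contains $2g$ horizontal scar disks. The complement $S^3 \setminus \mathrm{int}(B)$ is then also a 3-ball.

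Next, I would define the reimbedding $f$ by keeping $B$ fixed pointwise and re-attaching 1-handles between the paired scar disks, with the $i$-th 1-handle routed through $S^3 \setminus \mathrm{int}(B)$ inside a thin horizontal slab $h^{-1}([t_i - \epsilon, t_i + \epsilon])$. Because any two arcs in a 3-ball with the same endpoints are ambient isotopic rel endpoints, I have complete freedom in how I route these 1-handles within their slabs; in particular, I can arrange them to be unlinked and unknotted so that the resulting $f(H)$ is a standardly embedded genus-$g$ handlebody in $S^3$. By construction $h = h \circ f$ on $H$, establishing~(1).

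Since an unknottedly embedded genus-$g$ handlebody in $S^3$ has a handlebody complement (the standard genus-$g$ Heegaard splitting), (2) then follows automatically. The main obstacle is organizing the height-preserving routing so that the resulting handlebody is genuinely standard: the subtlety is that height-preservation severely restricts the isotopies one is allowed to perform, and one must verify that enough room exists inside each thin slab $h^{-1}([t_i - \epsilon, t_i + \epsilon])$ to unknot and unlink the 1-handles while staying inside $S^3 \setminus \mathrm{int}(B)$. This hinges on the observation that each scar pair lies in a single level sphere, so after possibly shrinking $\epsilon$ one can apply the triviality of properly embedded arcs in a 3-ball (Alexander's theorem for arcs) slab by slab, standardizing the routing without ever moving any point of $H$ away from its original height.
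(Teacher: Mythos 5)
There is a genuine gap, and it occurs at the very first step. The hypothesis only says that the \emph{boundaries} of a complete collection of meridian disks are horizontal circles on $\partial H$; it does not say, and it is not true in general, that the meridian disks themselves can be leveled. Your ``small isotopy supported near $\bigcup D_i$ fixing $\partial H$ pointwise'' would require that the horizontal circle $\partial D_i$ bound a disk inside $H$ lying in the level sphere $h^{-1}(t_i)$; but the two disks that $\partial D_i$ bounds in that level sphere typically meet $\partial H$ in many other circles and need not be contained in $H$ at all, so $D_i$ cannot in general be isotoped rel boundary to a horizontal disk. This assumption quietly removes exactly the difficulty the theorem addresses: if every meridian disk were level, $H$ would already be in an essentially standard position and there would be little left to prove. (Note also that the paper itself gives no proof of this statement; it quotes it as Corollary 5.4 of Scharlemann--Schultens, whose argument uses their planar-presentation machinery and works only with the hypothesis that the disk \emph{boundaries} are level.)

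Even granting level disks, the re-routing step is not justified. The claim that ``any two arcs in a 3-ball with the same endpoints are ambient isotopic rel endpoints'' is false (properly embedded arcs in a ball can be knotted), so it cannot deliver the asserted ``complete freedom.'' More importantly, freedom of choice is not the real issue: the new $1$-handles must attach to the fixed scar disks on $\partial B$, must avoid the fixed ball $B$, whose intersection with the slab $h^{-1}([t_i-\epsilon,t_i+\epsilon])$ can be a complicated collection of pieces separating the level spheres, must carry each point of the old handle to a point of the same height, and the final union of $B$ with the handles must have handlebody complement. You assert that such a routing exists and can be chosen ``standard,'' but nothing in the sketch shows that a height-preserving routing avoiding $B$ exists at all, let alone one for which the complement of $f(H)$ is a handlebody; that existence statement is essentially the theorem itself, so the argument is circular at the point where it matters.
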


The proof of the following theorem is a slight variation on the proof of Corollary 6.3 of \cite{SchSch}.

\begin{theorem}\label{htineq}
Suppose $\gamma$ is an embedding of knot-type $K$ in an unknotted solid torus $H$ in $S^3$. Suppose $f:H\rightarrow S^3$ is a knotted embedding of $H$ and $\gamma'=f(\gamma)$ is an embedding of knot-type $K'$. If $w(K)=w(K')$, then $ht_{\min}(K')\geq ht_{\min}(K)$ and $ht(K')\leq ht(K)$.
\end{theorem}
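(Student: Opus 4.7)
The plan is to show that for every thin position $\gamma^*$ of $K'$ we can produce a thin position $\tilde\gamma \in TP(K)$ with exactly the same thick levels; both inequalities then follow by specializing $\gamma^*$ to realize $ht(K')$ or $ht_{\min}(K')$. The equality $w(K)=w(K')$ is precisely what forces the re-embedded object to actually lie in $TP(K)$ rather than being a non-thin embedding of $K$.

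First I would fix $\gamma^* \in TP(K')$, so that $w(\gamma^*) = w(K') = w(K)$. The key geometric step is to surround $\gamma^*$ by a solid torus $H^*$ whose meridian disks are bounded by horizontal circles on $\partial H^*$, so that Theorem \ref{reimbed} applies. This $H^*$ is built by thickening $\gamma^*$ in a Morse-theoretically controlled way from the thick/thin structure of the embedding, as in the proof of Corollary 6.3 of \cite{SchSch}. Additionally, $H^*$ must be chosen so that $(H^*,\gamma^*)$ is homeomorphic, as a knot-in-solid-torus pair, to $(H,\gamma)$; this uses uniqueness up to isotopy of the essential companion torus of the satellite $K'$ together with the fact that $f(H)$ is knotted.

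Next I would apply Theorem \ref{reimbed} to obtain a reimbedding $\bar f : H^* \to S^3$ with $h \circ \bar f = h$ and with $\bar f(H^*)$ unknotted in $S^3$. Set $\tilde\gamma = \bar f(\gamma^*)$. Because $\bar f$ preserves the height function, $\tilde\gamma$ inherits the exact Morse data of $\gamma^*$: the same critical points, the same width, and the same thin and thick level spheres. Because $(H^*,\gamma^*) \cong (H,\gamma)$ and $\bar f(H^*)$ is unknotted, $\tilde\gamma$ has knot type $K$, and together with $w(\tilde\gamma) = w(K)$ this places $\tilde\gamma$ in $TP(K)$ with $ht(\tilde\gamma) = ht(\gamma^*)$. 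Specializing $\gamma^*$ to realize $ht_{\min}(K')$ yields $ht_{\min}(K) \leq ht_{\min}(K')$, and specializing it to realize $ht(K')$ yields $ht(K) \geq ht(K')$, completing both inequalities.

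The principal obstacle is the construction of $H^*$: simultaneously arranging horizontal meridian circles (to apply Theorem \ref{reimbed}) and ensuring the correct knot-in-torus pattern (so that the reimbedding produces $K$ rather than a different pattern knot) requires exploiting the hypothesis $w(\gamma^*) = w(K)$ to restrict how $\gamma^*$ can sit inside its companion neighborhood. This is precisely the place where the equality-of-widths hypothesis is essential, and the construction closely mirrors Scharlemann--Schultens' treatment of width under satellite operations in \cite{SchSch}.
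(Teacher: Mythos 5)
Your overall strategy --- put $K'$ in thin position, locate a companion solid torus with horizontal meridian circles, apply Theorem \ref{reimbed} to unknot it by a height-preserving reimbedding, and use $w(K)=w(K')$ at the end to conclude that the image is in $TP(K)$ --- is the same as the paper's. However, there is a genuine gap at the central step, the construction of $H^*$. You propose to build $H^*$ by ``thickening $\gamma^*$'', which would produce a tubular neighborhood of $\gamma^*$ (in which $\gamma^*$ is a core, i.e., the trivial pattern), not a solid torus in which $\gamma^*$ sits with the pattern $(H,\gamma)$; you then require both that $(H^*,\gamma^*)\cong(H,\gamma)$ and that $\partial H^*$ have horizontal meridians, acknowledge that reconciling these is ``the principal obstacle'', and offer no argument beyond the assertion that the hypothesis $w(\gamma^*)=w(K)$ restricts how $\gamma^*$ sits inside its companion neighborhood. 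That assertion is both unproved and misplaced: the width hypothesis plays no role in producing $H^*$ or its horizontal meridian.

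The paper resolves the obstacle much more simply. Take $H^*$ to be the image of $f(H)$ under the ambient isotopy carrying $\gamma'$ to $\gamma^*$, perturbed so that $\partial H^*$ is Morse with respect to $h$; then $(H^*,\gamma^*)\cong(f(H),\gamma')\cong(H,\gamma)$ automatically, with no appeal to uniqueness of companion tori. The horizontal meridian needed for Theorem \ref{reimbed} then comes for free: every regular level of $h|_{\partial H^*}$ consists of circles lying in a level sphere, and since $\partial H^*$ is a torus, Morse theory yields a regular value at which some level circle is essential in $\partial H^*$; an essential circle on the boundary of a knotted solid torus that bounds a disk in $S^3$ must be a meridian, because the exterior of a nontrivial knot is boundary-irreducible. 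The hypothesis $w(K)=w(K')$ is used only at the very end, exactly as in your last step: the height-preserving reimbedding $g$ gives $w(g(\gamma^*))=w(\gamma^*)=w(K')=w(K)$ and the same number of thick levels, so $g(\gamma^*)\in TP(K)$ and both inequalities follow. (The remaining subtlety --- choosing $g$ so that $g(\gamma^*)$ actually has knot type $K$ rather than some other pattern --- is real, but it is settled by the choice of reimbedding in \cite{SchSch}, not by the width hypothesis.)
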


\begin{proof}
Let $\gamma^*\in TP(K')$ such that $\gamma^*$ has $ht_{\min}(K')$ thick levels with respect to $h$. Let $H^*$ be the image of $f(H)$ under an isotopy taking $\gamma'$ to $\gamma^*$. We can additionally assume $\partial H^*$ is Morse with respect to $h$ after this isotopy. For every regular value $s$ of $h|_{\partial H^*}$, $(h|_{\partial H^*})^{-1}(s)$ is an unlink in $S^3$. By standard Morse theory and since $\partial H^*$ is a torus, there exists a regular value $s^*$ such that $(h|_{\partial H^*})^{-1}(s^*)$ has a component $c$ that is an essential loop in $\partial H^*$. Moreover, since $H^*$ is a knotted solid torus, $c$ is a meridian curve for $H^*$. By Theorem \ref{reimbed}, there is a reimbedding $g:H^*\rightarrow S^3$ of $H^*$ that preserves height and results in $g(H^*)$ being unknotted. Moreover, after a suitable choice of $g$, we can assume that $g(\gamma^*)\in K$, see \cite{SchSch} for details. Since $g$ is height preserving, $\gamma^*$ and $g(\gamma^*)$ have the same number of thick levels and $w(\gamma^*)=w(g(\gamma^*))$. Since $\gamma^*$ is a thin position for $K'$ and $w(K)=w(K')$, then $g(\gamma^*)$ is a thin position for $K$.  Hence, $ht_{\min}(K')\geq ht_{\min}(K)$.

Alternatively, let $\gamma^* \in TP(K')$ such that $\gamma^*$ has $ht(K')$ thick levels with respect to $h$. By the same argument as give above, we can find a height preserving reimbedding $g$ and $g(\gamma^*)\in K$ such that $w(K)=w(K')=w(\gamma^*)=w(g(\gamma^*))$ and $ht(K')=ht(\gamma^*)=ht(g(\gamma^*))$. Hence, $ht(K')\leq ht(K)$.
\end{proof}

\begin{remark}
It is interesting to note that Theorem \ref{htineq} does not hold if the hypothesis of $w(K)=w(K')$ is omitted. For example, if $L$ is a 2-bridge knot, it is an easy exercise to show that $ht(L)=1$ and $ht(L\# L)=2$. However, declaring $L=K$ and $L\# L=K'$ meets all the hypotheses of Theorem \ref{htineq} except $w(K)=w(K')$. Additionally, in Figure \ref{counterexample3} we give an example of a thin position $\gamma'$ for a knot-type $K'$ embedded in a knotted solid torus $f(H)$ together with an embedding $\gamma$ of knot type $K$ contained in the unknotted solid torus $H$ such that $\gamma'=f(\gamma)$. The embedding $\gamma'$ depicted in Figure \ref{counterexample3} is a thin position of $K'$ by Lemma 6.0.6 of \cite{BThesis}. Hence $ht_{\min}(K')=1$. Since $\beta(K)=4$, then the embedding $\gamma$ in the figure illustrates that no bridge position for $K$ is a thin position. Hence, $ht_{\min}(K)\geq2$ and moreover $ht_{\min}(K)=2$ since $mp(K)=1$ by \cite{O1984}. Thus, $1=ht(K')=ht_{\min}(K')< ht_{\min}(K)=ht(K)=2$ and $w(K')>w(K)$.
\end{remark}

\begin{figure}[htbp]
	\begin{center}
	\includegraphics[trim=0mm 0mm 0mm 0mm, width=.6\linewidth]{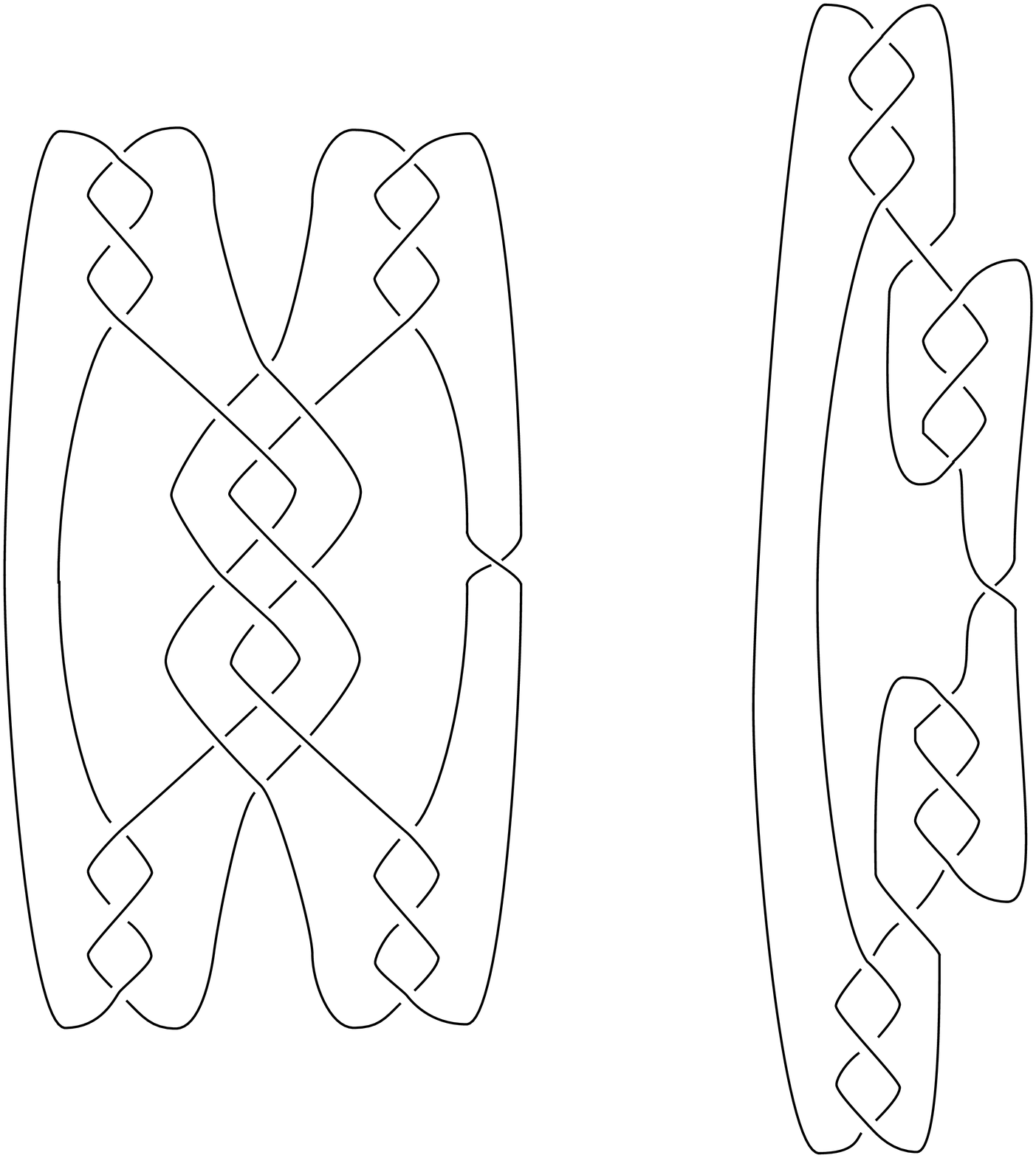}
	\end{center}
	\caption{A counterexample for Theorem \ref{htineq} without $w(K)=w(K')$}
	\label{counterexample3}
\end{figure}

In \cite{BT1}, Blair and Tomova construct an infinite collection of ambient isotopy classes of knots $\mathcal{K}$ from the schematic depicted in the left-hand side of Figure \ref{counterexample} by inserting suitable braids $B_1,...,B_4$ into the boxes shown. By Theorems 12.1 and 12.2 of \cite{BT1}, for all $K\in \mathcal{K}$, $w(K)=134$ and any thin position for $K$ has exactly three thick levels of width $10$ and exactly two thin levels of width $4$. Hence, $ht_{\min}(K)=ht(K)=3$ for all $K\in \mathcal{K}$. Note that if we consider the height function to be increasing from the bottom to the top of Figure \ref{counterexample}, then, for suitable choices of $B_1,...,B_4$, the left-hand side of the figure depicts a thin position for any knot in $\mathcal{K}$.

\begin{figure}[htbp]
	\begin{center}
	\begin{tabular}{cc}
	\includegraphics[trim=0mm 0mm 0mm 0mm, width=.35\linewidth]{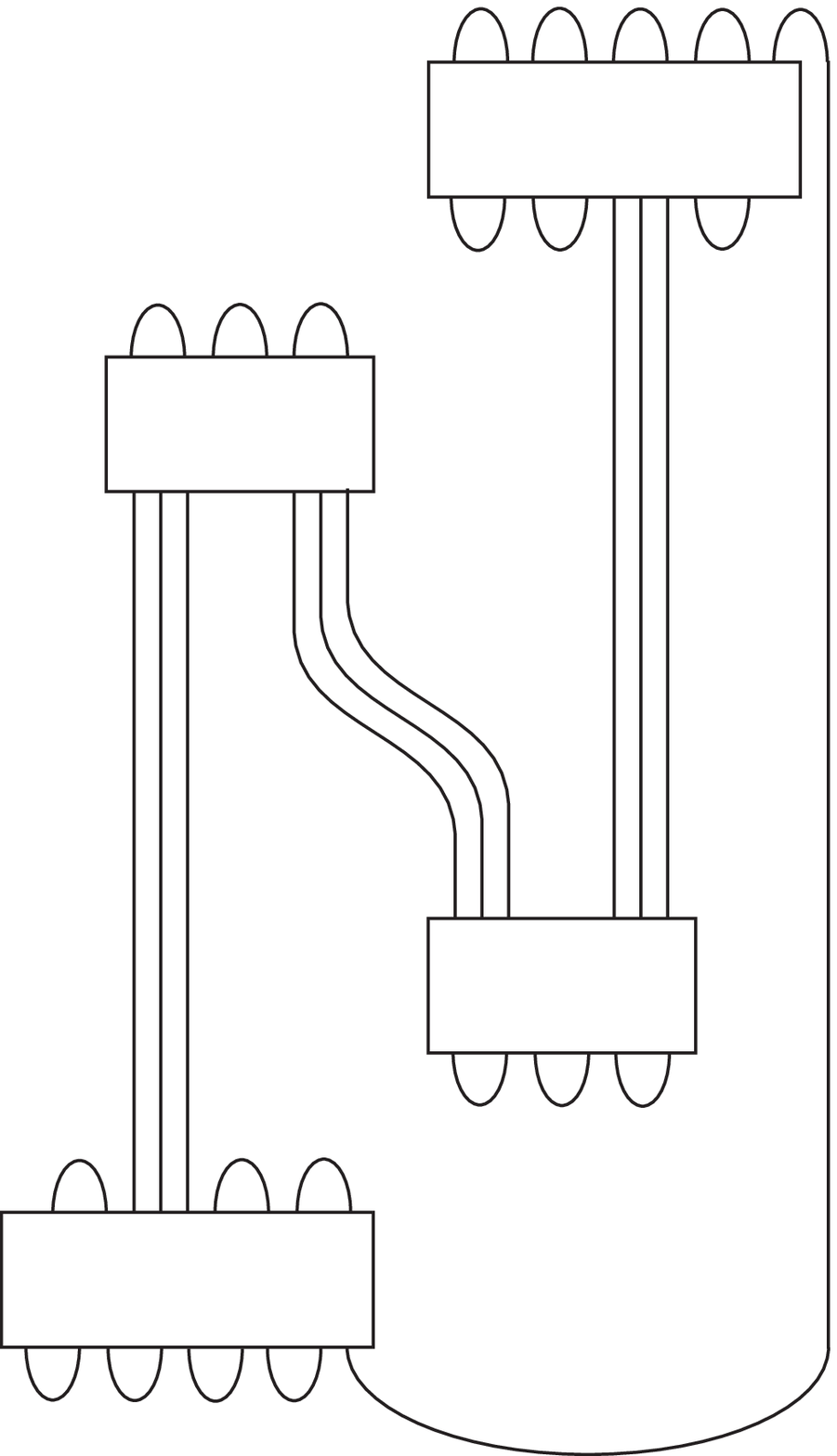}&
	\includegraphics[trim=0mm 0mm 0mm 0mm, width=.35\linewidth]{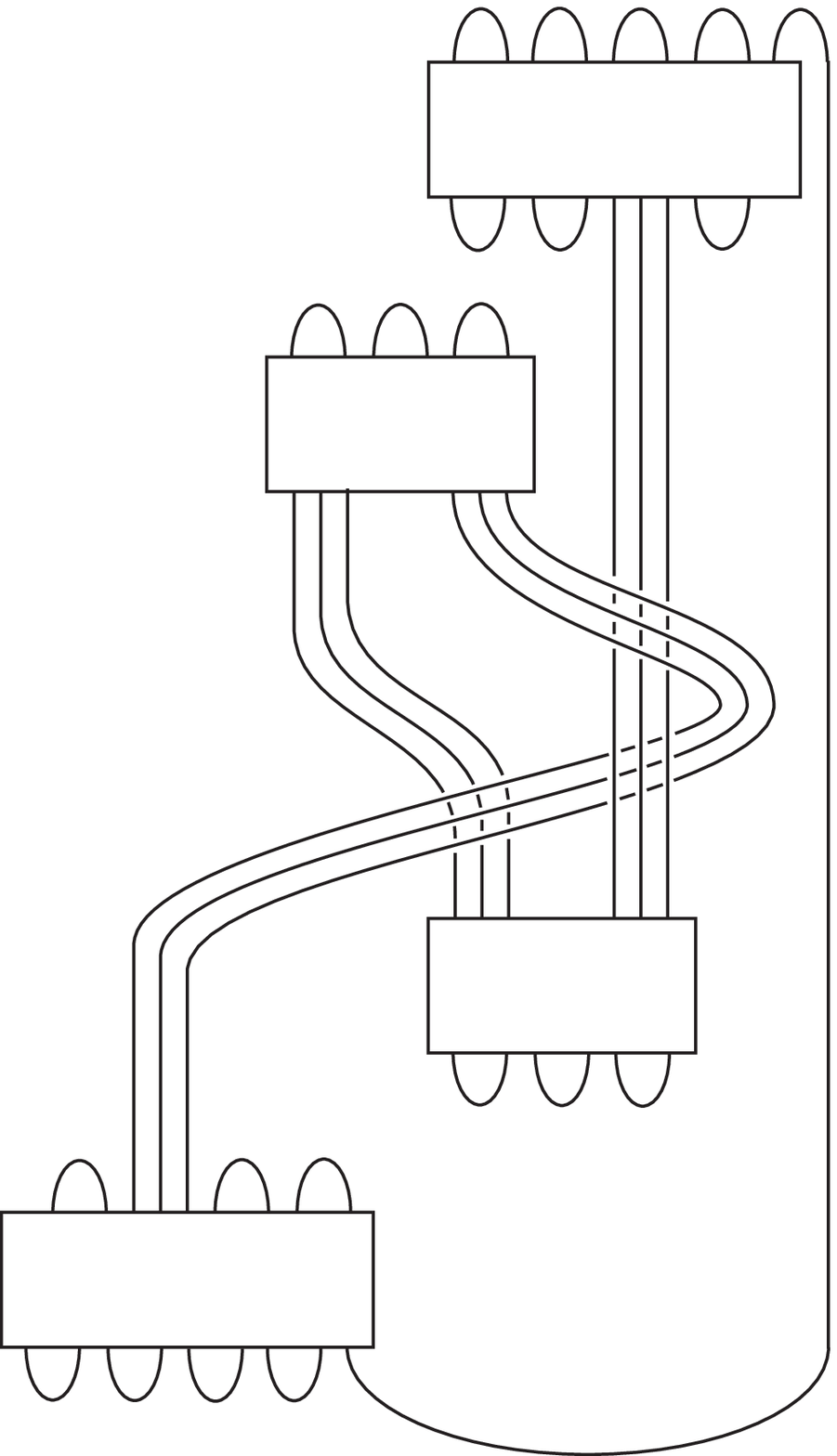}\\
	$\gamma\in K$ & \\
	\end{tabular}
	\end{center}
	\caption{$K\in\mathcal{K}$ and $K\# K_2$}
	\label{counterexample}
\end{figure}

\begin{proof}[Proof of Theorem \ref{main}]
By \cite{BT1}, $w(K)=134$ and Figure \ref{counterexample} gives a thin position for $K$. Figure \ref{counterexample} demonstrates that $w(K_2 \# K)\leq w(K)=134$. By Corollary 6.4 of \cite{SchSch}, $w(K_2 \# K)\geq w(K)$ and, therefore, $w(K_2 \# K)= w(K)$. Hence, Figure \ref{counterexample} demonstrates a thin position or $K_2 \# K$ with three thick levels. In particular, $ht(K_2 \# K)\geq 3$ and $ht_{\min}(K_2 \# K)\leq 3$.

If we apply Theorem \ref{htineq} to the embeddings of $K_2\# K$ and $K$ depicted in Figure \ref{counterexample} where $f(H)$ is the knotted ``swallow-follow'' torus that contains $K_2\# K$, swallows $K$ and follows $K_2$, then, since $w(K_2 \# K)= w(K)$, $ht(K_2 \# K)\leq ht(K)=3$ and $ht_{\min}(K_2 \# K)\geq ht_{\min}(K)=3$. Hence, $ht(K_2 \# K)=3$ and $ht_{\min}(K_2 \# K)=3$.
\end{proof}

\subsection{Proof of Theorem \ref{rt} and \ref{lrt}}

\begin{proof}[Proof of Theorem \ref{rt}]
Firstly, if $K$ is the trivial knot, then we have $r(K)=1$ and $trunk(K)=2$, and hence the inequality of Theorem \ref{rt} holds.

Next, we will show that for a non-trivial knot $K$, a height function $h:S^3\to \mathbb{R}$ and a closed surface $F$ containing $K$,
\[
r(F,K) \le \frac{\max_{t\in \Bbb{R}} |h^{-1}(t)\cap K|}{2}.
\]
By taking maximal of the left-hand side and minimal of the right-hand side, we have
\[
\max_{F\in\mathcal{F}} r(F,K) \le \frac{\min_{\gamma \in K} \max_{t\in \Bbb{R}} |h^{-1}(t)\cap \gamma|}{2}.
\]
Thus,
\[
\displaystyle r(K)\le \frac{trunk(K)}{2}.
\]

By perturbing $F$ relative to $K$, we may assume that any critical point of $F$ is not on $K$ and $F$ is also in a Morse position with respect to $h$.
Since the genus of $F$ is greater than $0$, there exists a regular value $t\in \mathbb{R}$ for $F$ such that $h^{-1}(t)\cap F$ contains at least two essential loops in $F$.
Take two loops $l_1,\ l_2$ of $h^{-1}(t)\cap F$ which are essential in $F$ and innermost in $h^{-1}(t)$.
Let $D_1,\ D_2$ be mutually disjoint disks in $h^{-1}(t)$ which are bounded by $l_1,\ l_2$ respectively.
Then we have
\[
|\partial D_1\cap K| + |\partial D_2\cap K| \le \max_{t\in \Bbb{R}} |h^{-1}(t)\cap K|.
\]
Without loss of generality, we may assume that
\[
\displaystyle |\partial D_1\cap K|\le \frac{\max_{t\in \Bbb{R}} |h^{-1}(t)\cap K|}{2}.
\]
By cutting and pasting $D_1$ if necessary, we may assume that $D_1\cap F=\partial D_1$.
Thus, $D_1$ is a compressing disk for $F$ and we have
\[
r(F,K)\le |\partial D_1\cap K|.
\]
\end{proof}

Let $K$ be a knot admitting an essential tangle decomposition $(S^3,K)=(B_1,T_1)\cup_S (B_2,T_2)$, where $S$ is a tangle decomposing sphere.
In the following, we show
\[
\displaystyle r(K)\le \frac{\min\{ trunk(B_1,T_1), trunk(B_2,T_2) \}}{2}.
\]

\begin{proof}[Proof of Theorem \ref{lrt}]
Let $F$ be a closed surface containing $K$. Let $h_i:B_i\to \mathbb{R}$ be a standard Morse function for $i=1,2$.
It is suffice to show that
\[
r(F,K) \le \frac{\max_{t\in \Bbb{R}} |h_i^{-1}(t)\cap T_i|}{2},
\]
for $i=1,2$.

We remark that $K$ is non-trivial since $K$ admits an essential tangle decomposition.
Hence the genus of $F$ is greater than $0$ and we may assume that $2\le r(F,K)$.
Since $F$ and $S$ are essential in the exterior of $K$, we may assume that each loop of $F\cap S$ is essential in $F$. If $F\cap S$ consists of a single essential loop, then both $F\cap B_1$ and $F\cap B_2$ are surfaces with strictly positive genus.
Then, there exists a regular value $t_i\in \mathbb{R}$ for $F$ such that $h_i^{-1}(t_i)\cap F$ contains at least two essential loops in $F$ for $i=1,2$. Otherwise,  $F\cap S$ consists of at least two essential loops and $h_i^{-1}(0)\cap F$ contains at least two essential loops in $F$ for $i=1,2$.
Similarly to Proof of Theorem \ref{rt}, we obtain a compressing disk $D$ for $F$ in $B_i$ such that
\[
\displaystyle r(F,K)\le |\partial D\cap T_i| \le \frac{\max_{t\in \Bbb{R}} |h^{-1}(t)\cap T_i|}{2}.
\]
\end{proof}

\section{Remarks}

\subsection{Several versions of thin position}
In the previous part of this paper, we considered Gabai's thin position $TP(K)$ (\cite{Gabai1}), that is, the set of all position $\gamma$ minimizing the width $w(\gamma)=\sum w(h^{-1}(r_{i}))$ for chosen regular values $r_{1},r_{2},...,r_{n}$.
Then we have already established the following.

\begin{enumerate}
\item There exists a candidate knot $K$ in Figure \ref{counterexample4} such that $ht(K)>ht_{\min}(K)$.
\item There exist two knots $K$ and $K'$ in Theorem \ref{main} such that $ht(K\# K')< ht(K)+ht(K')$.
\item There exists a candidate knot $K=K_{4,1,3,3}$ in \cite{DZ} such that $trunk(K)$ cannot be obtained in $TP(K)$.
\item Every thinnest level sphere for $\gamma\in TP(K)$ is incompressible in the complement of a knot \cite{W2004}.
\item There exists a knot $K$ in \cite{BZ} such that $\gamma\in TP(K)$ has a compressible thin level sphere.
\end{enumerate}

Next, let $MCP(K)$ be the set of all Morse positions of $K$ which have minimal critical points among all Morse positions.
We say that a knot belonging to $MCP(K)$ is in a {\em minimal critical position}.
Similarly, we can define the {\em MCP-height} and the {\em min-MCP-height} of $K$ as
\[
\displaystyle ht^{MCP}(K)=\max_{\gamma\in MCP(K)} ht(\gamma).
\]
\[
\displaystyle ht^{MCP}_{\min}(K)=\min_{\gamma\in MCP(K)} ht(\gamma).
\]
Note that $ht^{MCP}_{\min}(K)=1$ for any knot $K$.
Then we have the following.

\begin{enumerate}
\item There exists a knot $K$ in Figure \ref{stack} such that $ht^{MCP}(K)>ht^{MCP}_{\min}(K)$.
\item There exist two knots $K$ and $K'$ in Figure \ref{stack} such that $ht^{MCP}(K\# K')> ht^{MCP}(K)+ht^{MCP}(K')$.
\item There exists a candidate knot $K=K_{4,1,3,3}$ in \cite{DZ} such that $trunk(K)$ cannot be obtained in $MCP(K)$.
\item There exists a knot $K$ in Figure \ref{stack} such that $\gamma\in MCP(K)$ has a compressible thinnest level sphere.
\item There exists a knot $K$ in Figure \ref{stack} such that $\gamma\in MCP(K)$ has a compressible thin level sphere.
\end{enumerate}

\begin{figure}[htbp]
	\begin{center}
	\includegraphics[trim=0mm 0mm 0mm 0mm, width=.4\linewidth]{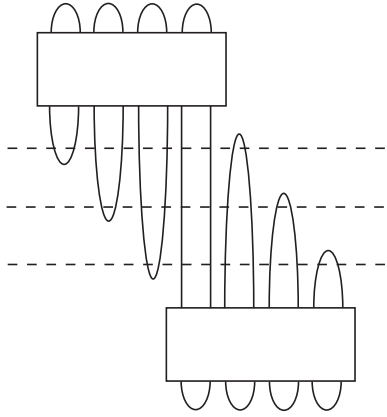}
	\end{center}
	\caption{A minimal critical position of $K\# K'$}
	\label{stack}
\end{figure}

Note that a knot $K$ in Figure \ref{stack} is not in a thin position, thus we have $\gamma\in MCP(K)\backslash TP(K)$.
Moreover, we remark that there exists a knot $K=K_\alpha$ in \cite{BT1} such that $TP(K)\cap MCP(K)=\emptyset$.

Finally, we define the {\em ordered} thin position $OTP(K)$, that is, a Morse position $\gamma$ of $K$ which minimizes the lexicographical order of monotonically non-increasing ordered set $\{w_i\}$, where $w_i$ $(i=1,\ldots,ht(\gamma))$ is the number of points of intersection between thick level spheres and $\gamma$.

For example, the embedding $\gamma\in K\# K'$ in Figure \ref{stack} has the complexity $\{ 10,10,10\}$, 
but it can be reduced to $\{8,8\}$ and we obtain an embedding $\gamma'\in OTP(K\# K')$.

Similarly, we define the {\em OTP-height} of $K$ as
\[
\displaystyle ht^{OTP}(K)=\max_{\gamma\in OTP(K)} ht(\gamma).
\]
\[
\displaystyle ht^{OTP}_{\min}(K)=\min_{\gamma\in OTP(K)} ht(\gamma).
\]
Then we have the following.

\begin{enumerate}
\item For any knot $K$, $ht^{OTP}(K)=ht^{OTP}_{\min}(K)$.
\item There exist a candidate knot $K_{4,1,3,3}$ in \cite{DZ} and a two-bridge knot $K_2$, $ht^{OTP}(K_{4,1,3,3}\# K_2)< ht^{OTP}(K_{4,1,3,3})+ht^{OTP}(K_2)$ as in Theorem \ref{main}.
\item $trunk(K)$ can be obtained in $OTP(K)$, as the first term of the monotonically non-increasing ordered set $\{w_i\}$.
\item Every thinnest level sphere for $\gamma\in OTP(K)$ is incompressible in the complement of a knot (by a similar argument to \cite{W2004}).
\item There exists a candidate embedding $\gamma\in K$ in \cite{BZ} such that $\gamma \in OTP(K)$ and $\gamma$ has a compressible thin level sphere.
\end{enumerate}

In Figure \ref{venn}, we summarize a relation on several versions of thin position.
For each region, we give an example of an embedding in the corresponding subset of Morse embeddings. Each of these examples is conjectural with the exception of the 2-bridge embedding and the embedding from Figure \ref{stack}, which can easily be verified.
Potential examples of embeddings $k_{2,1,3,7}$, $k'_{2,1,3,7}$, $k_{4,1,3,3}$ and $k'_{4,1,3,3}$ are referred from \cite{DZ}.
We have a potential example $\gamma'\in (TP(K)\cap OTP(K)) \backslash MCP(K)$ from Figure \ref{counterexample4}.

\begin{figure}[htbp]
	\begin{center}
	\includegraphics[trim=0mm 0mm 0mm 0mm, width=.45\linewidth]{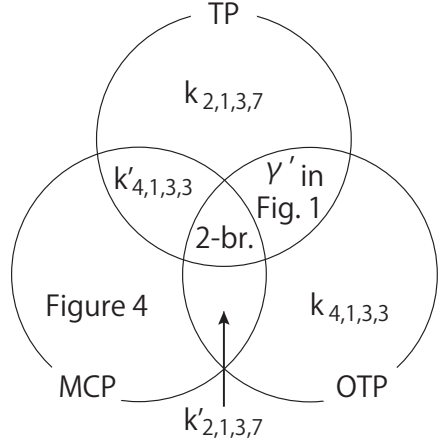}
	\end{center}
	\caption{Venn diagram for $TP$, $MCP$ and $OTP$}
	\label{venn}
\end{figure}

\subsection{Trunk-to-height ratio}
We propose that the definition of a proportion of a knot, like as a person's waist-to-height ratio, BMI and so on.
We define the {\em proportion} of a knot $K$ as
\[
\displaystyle pro(K)=\min_{\gamma\in TP(K)} \frac{trunk(\gamma)}{ht(\gamma)}\frac{1}{2\beta(\gamma)}.
\]
${trunk(\gamma)}/{ht(\gamma)}$ measures the slimness of the knot, and it is normalized by dividing by $2\beta(\gamma)$.
Thus we have $0<pro(K)\le 1$ for any knot $K$, and $pro(K)=1$ if and only if any thin position of $K$ is a bridge position.
It holds by \cite{T1997} that if $mp(K)=0$, then $pro(K)=1$.
We remark that $trunk(K)$ and $\beta(K)$ may not be obtained in $TP(K)$ as noted in the previous subsection.

Similarly, we can also define the {\em MCP-proportion} and {\em OTP-proportion} of $K$ as
\[
\displaystyle pro^{MCP}(K)=\min_{\gamma\in MCP(K)} \frac{trunk(\gamma)}{ht(\gamma)}\frac{1}{2\beta(K)},
\]
\[
\displaystyle pro^{OTP}(K)=\min_{\gamma\in OTP(K)} \frac{trunk(K)}{ht(\gamma)}\frac{1}{2\beta(\gamma)}.
\]

We can also consider another version by replacing ${trunk(\gamma)}/{ht(\gamma)}$ with the {\em average trunk} $at(\gamma)$, that is, the average of the intersection number of all thick level spheres and $\gamma$.

We expect that high distance knots have high proportions.
For example, if a knot $K$ has a bridge sphere of distance greater than or equal to $2\beta(K)$, then every thin position is a bridge position by using results of \cite{T1997} and \cite{BS2005}, and hence $pro(K)=1$.

\begin{example}\label{proexa}
For a knot $K$ given in Figure \ref{counterexample4}, we conjecture that there exists an embedding $\gamma\in (TP(K)\cap MCP(K))\backslash OTP(K)$ and that 
\[
pro^{MCP}(K)=\frac{22}{1}\frac{1}{2\times 11}=1.
\]
We also conjecture that there exists an embedding $\gamma'\in (TP(K)\cap OTP(K))\backslash MCP(K)$ and that
\[
pro(K)=pro^{OTP}(K)=\frac{18}{2}\frac{1}{2\times 13}=0.346\ldots .
\]
\end{example}

\begin{example}\label{proexa2}
For a knot $K\in \mathcal{K}$ given in Theorem \ref{main}, by \cite{BT1}, there exists $\gamma \in K$ in Figure \ref{counterexample} such that $\gamma\in TP(K)\backslash MCP(K)$ and $\gamma$ is not in $OTP(K)$ since there exists an embedding of $K$ with two thick levels of width $10$ and two thick levels of width $8$.
Since it is shown in \cite{BT1} that every $\gamma\in TP(K)$ has exactly three thick levels, each of width $10$, we have 
\[
pro(K)=\frac{10}{3}\frac{1}{2\times 11}=0.1515\ldots .
\]
Moreover, by \cite{BT1}, there is $\gamma'\in MCP(K)\backslash (TP(K)\cup OTP(K))$ in Figure \ref{mcp}, and we have
\[
pro^{MCP}(K)=\frac{12}{2}\frac{1}{2\times 10}=0.3.
\]
We conjecture that 
\[
pro^{OTP}(K)=\frac{10}{4}\frac{1}{2\times 12}=0.1388\ldots .
\]
\begin{figure}[htbp]
	\begin{center}
	\includegraphics[trim=0mm 0mm 0mm 0mm, width=.4\linewidth]{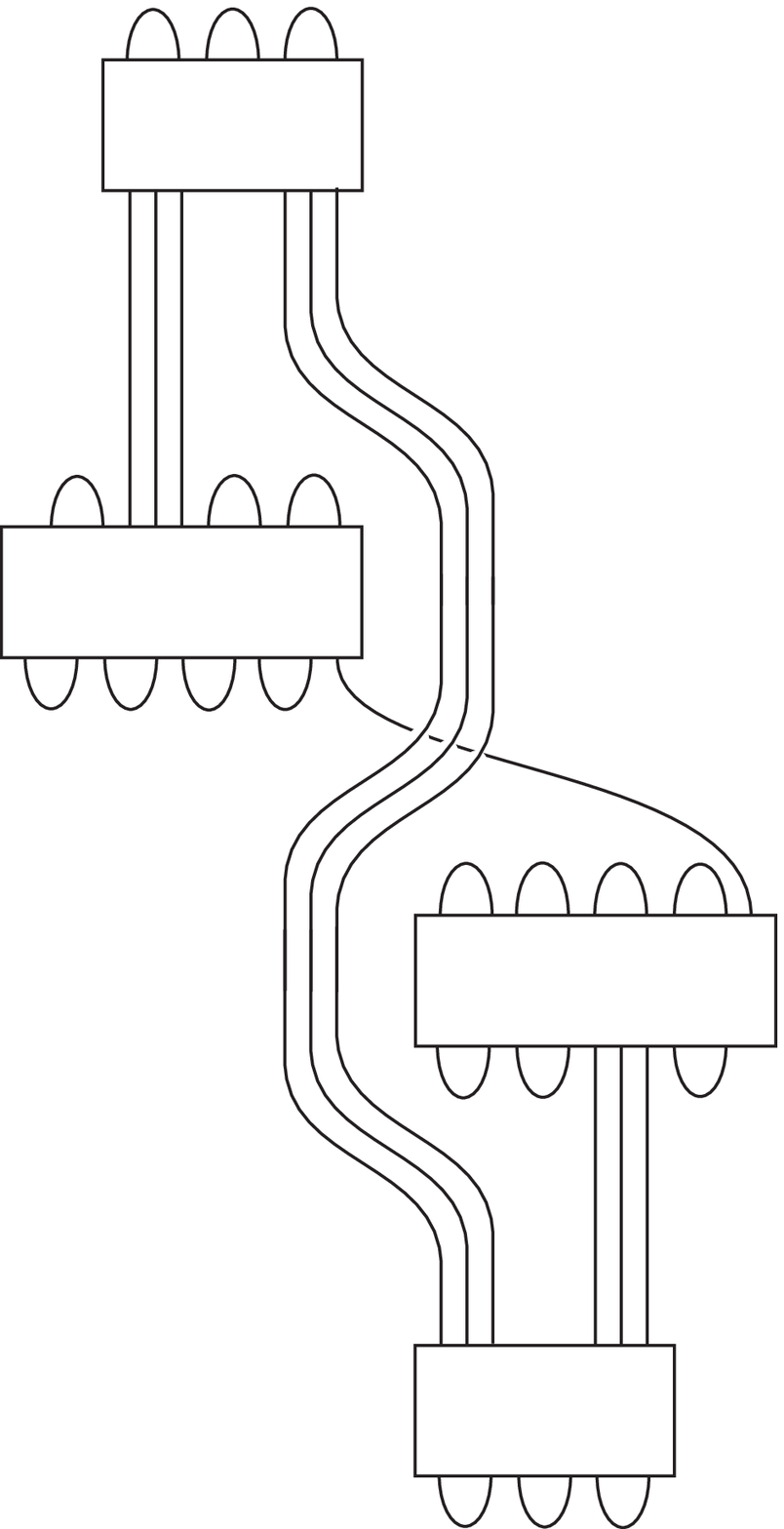}
	\end{center}
	\caption{$\gamma'\in MCP(K)\backslash (TP(K)\cup OTP(K))$, where $K\in \mathcal{K}$}
	\label{mcp}
\end{figure}
\end{example}

By Theorem \ref{rt}, we obtain the following.

\begin{corollary}
For any knot $K$,
\[
\displaystyle \frac{r(K)}{ht(K)\beta^{TP}_{max}(K)}\le pro(K),
\]
where $\beta^{TP}_{max}(K)$ denotes the {maximal} number of maximal points of $\gamma\in TP(K)$.
\end{corollary}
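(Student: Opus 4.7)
The plan is to chain together Theorem~\ref{rt} with the definitions of $pro(K)$, $trunk(K)$, $ht(K)$, and $\beta^{TP}_{max}(K)$, exploiting that all of them are extrema over related sets.

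First I would fix $\gamma^* \in TP(K)$ that attains the minimum in the definition of $pro(K)$, so
\[
pro(K) = \frac{trunk(\gamma^*)}{ht(\gamma^*)}\cdot \frac{1}{2\beta(\gamma^*)}.
\]
Since $\gamma^* \in TP(K)$, the definitions $ht(K)=\max_{\gamma\in TP(K)} ht(\gamma)$ and $\beta^{TP}_{max}(K)=\max_{\gamma\in TP(K)}\beta(\gamma)$ immediately give $ht(\gamma^*)\le ht(K)$ and $\beta(\gamma^*)\le \beta^{TP}_{max}(K)$. Replacing each by the corresponding upper bound in the denominator yields
\[
pro(K) \ge \frac{trunk(\gamma^*)}{2\cdot ht(K)\cdot \beta^{TP}_{max}(K)}.
\]

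Next I would bring in $r(K)$ using Theorem~\ref{rt}. Since $trunk(K)=\min_{\gamma\in K}\max_{t}|h^{-1}(t)\cap \gamma|$ is a minimum over all embeddings, $trunk(K)\le trunk(\gamma^*)$, and therefore
\[
r(K) \le \frac{trunk(K)}{2} \le \frac{trunk(\gamma^*)}{2} \le pro(K)\cdot ht(K)\cdot \beta^{TP}_{max}(K).
\]
Dividing through by the positive quantity $ht(K)\,\beta^{TP}_{max}(K)$ gives the claimed inequality.

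I do not expect a genuine obstacle here: the argument is purely a matter of bookkeeping with extrema. The one place to be slightly careful is checking that the substitutions $ht(\gamma^*)\le ht(K)$ and $\beta(\gamma^*)\le \beta^{TP}_{max}(K)$ are licit, which is exactly why the $\gamma^*$ achieving $pro(K)$ must be chosen inside $TP(K)$ (this is already built into the definition of $pro$). Everything else follows from Theorem~\ref{rt} and the monotonicity of $trunk$.
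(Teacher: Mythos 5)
Your proof is correct and is exactly the intended argument: the paper states this as an immediate consequence of Theorem \ref{rt} without writing out the bookkeeping, and your chain $r(K)\le trunk(K)/2\le trunk(\gamma^*)/2$ together with $ht(\gamma^*)\le ht(K)$ and $\beta(\gamma^*)\le \beta^{TP}_{max}(K)$ is precisely that bookkeeping. No gaps.
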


\subsection{Waist and representativity}
Theorem \ref{rt} is compared with the inequality between the waist and trunk of knots.
We define the {\em waist} of a knot $K$ as
\[
waist(K)=\max_{F\in\mathcal{F}} \min_{D\in\mathcal{D}_F} |D\cap K|,
\]
where $\mathcal{F}$ denotes the set of all closed surfaces in $S^3-K$, and $\mathcal{D}_F$ denotes the set of all compressing disks for $F$ in $S^3$ (\cite{O2}).
Then, we have $waist(K)=0$ for the trivial knot $K$ since any closed surface in $S^3-K$ is compressible, and by considering the peripheral torus $\partial N(K)$, $waist(K)\ge 1$ for non-trivial knots.
It is known that $waist(K)=1$ for $3$-braid knots (\cite{LP1985}), alternating knots (\cite{M1984}), almost alternating knots (\cite{A1992}), Montesinos knots (\cite{O1984}), toroidally alternating knots (\cite{A1994}), algebraically alternating knots (\cite{MO2010}), and that $waist(K) = p \cdot waist(J)$ for inconsistent cable knots with index $p$, where $J$ is a companion knot for $K$ (\cite{AKT}).

\begin{theorem}[{\cite[Theorem 1.9]{O2}}]\label{wt}
For any knot $K$, we have
\[
\displaystyle waist(K)\le \frac{trunk(K)}{3}.
\]
\end{theorem}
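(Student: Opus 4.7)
The strategy is to mimic the proof of Theorem~\ref{rt}, upgrading the pigeonhole factor from $1/2$ to $1/3$. Fix an embedding $\gamma \in K$ realizing the trunk, and let $F$ be any closed surface in $S^3 \setminus K$. I would perturb $F$ so that $h|_F$ is Morse and $F$ is transverse to all level spheres. If $F$ already admits a compressing disk disjoint from $K$ (for instance, if $F$ is a $2$-sphere), then $\min_{D \in \mathcal{D}_F}|D\cap K|=0$ and the inequality is trivial, so I may restrict to the case where $F$ is essential in $S^3\setminus K$; in this case $F$ has genus at least one while remaining compressible in $S^3$.

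The main step is to produce three compressing disks $D_1, D_2, D_3$ for $F$ in $S^3$ that lie on a common level sphere $S=h^{-1}(t^*)$ and are pairwise disjoint in $S$. Since the $D_i$ are disjoint disks in the $2$-sphere $S$, the points of $K \cap S$ are partitioned among the interiors of the $D_i$ and the complementary region, giving
\[
|D_1 \cap K| + |D_2 \cap K| + |D_3 \cap K| \;\le\; |S \cap K| \;\le\; trunk(K).
\]
Pigeonhole then furnishes some $D_i$ with $|D_i \cap K| \le trunk(K)/3$, and maximizing over $F$ concludes $waist(K) \le trunk(K)/3$.

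The hard part is securing three disjoint compressing disks at a single level. The naive innermost-loop argument yields only two disks, since all loops of $F \cap S$ are mutually parallel on the sphere $S$ and a nested linear family has only two innermost members. To obtain a third disk I would seek a level $t^*$ where $F \cap h^{-1}(t^*)$ contains enough essential-in-$F$ curves that, after perhaps simplifying compressions, two bound innermost disks in $S$ while a third disk can be fitted into the annular complement---for example, by exploiting an essential loop of $F$ that is outermost in $F$ but whose companion disk can be isotoped off the innermost pair. Finding such a level should use a Morse-theoretic analysis of the critical points of $h|_F$ together with the incompressibility of $F$ in $S^3 \setminus K$ to force sufficient essential-loop structure. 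This Morse-theoretic step is what distinguishes the argument from Theorem~\ref{rt} and is where I expect the technical weight of the proof to lie.
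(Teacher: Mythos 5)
This statement is not proved in the paper at all: Theorem \ref{wt} is quoted verbatim from \cite[Theorem 1.9]{O2}, so there is no internal argument to compare yours with, and your proposal has to stand on its own. It does not, because the step you yourself flag as ``the hard part'' is precisely the content of the theorem and it is left unproved; worse, the two ideas you sketch for producing the third disk do not work. If $D_1$ and $D_2$ are disjoint disks in the level sphere $S$, the closure of $S\setminus(D_1\cup D_2)$ is an annulus, never a disk, so the ``annular complement'' cannot directly supply a third compressing disk contained in $S$; and a disk coming from a loop that is outermost in $F$, or obtained by isotoping a companion disk off $D_1\cup D_2$, need not lie in $S$ at all. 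But your counting inequality $|D_1\cap K|+|D_2\cap K|+|D_3\cap K|\le |S\cap K|\le trunk(K)$ requires all three disks to sit in the level sphere with disjoint interiors: once $D_3$ leaves $S$, its interior can meet $K$ arbitrarily often and the argument collapses back to the factor $1/2$ you already had from Theorem \ref{rt}. Note also that there are configurations in which a level sphere meets an essential torus $F$ in exactly two curves essential in $F$ (two meridians of the solid torus it bounds), so at such a level only two disjoint compressing disks exist; finding a level (after an isotopy of $F$ that keeps $K$ fixed in a trunk-realizing position) carrying three of them is a genuinely global statement that your Morse-theoretic hand-wave does not establish, and nothing in your outline suggests how to rule out that every level of every admissible isotopy of $F$ looks like the two-curve picture.

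Two smaller points. First, since the quantity being bounded is $|D\cap K|$ rather than $|\partial D\cap K|$, the cut-and-paste needed to make an essential-in-$F$ curve bound an honest compressing disk has to be checked not to increase the count; it does not, because the pasted pieces run parallel to $F$, which is disjoint from $K$, but this should be said explicitly since it is the one place where the waist setting differs from the representativity setting of Theorem \ref{rt}. Second, as written your argument proves only $waist(K)\le trunk(K)/2$; the improvement to the factor $1/3$ is exactly what \cite[Theorem 1.9]{O2} supplies, and you should consult that proof rather than expect a three-disk pigeonhole at a single level to deliver it.
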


Theorem \ref{rt} and \ref{wt} bear a close resemblance to each other.
We expected in \cite[Problem 26]{O3} that $waist(K)\le r(K)$ for any knot $K$.
For example, any alternating knots satisfy this inequality since $waist(K)=1$ (\cite{M1984}) and $r(K)=2$ (\cite{K}).
However, it does not hold for composite knots in general.
The waist behaves as expected under taking connected sums, that is, $waist (K_1\# K_2) = \max\{waist (K_1),waist (K_2)\}$ (\cite[Proposition 1.2]{O2}).
On the other hand, we have $r(K_1\# K_2)=2$ whenever $K_1$ and $K_2$ are non-trivial.
This shows that the representativity of knots behaves dissimilarly to other geometric knot invariants.

\subsection{Representativity and non-orientable spanning surfaces}

Aumann proved that any alternating knot bounds an essential non-orientable spanning surface (\cite{A}).
Indeed, he showed that both checkerboard surfaces are essential.
Recently, Kindred proved in \cite{K} that $r(K)=2$ for any non-trivial alternating knot $K$, which confirmed Conjecture 4 in \cite{O3}.
From these results, we expect that 
if a knot $K$ bounds an essential non-orientable spanning surface, then $r(K)=2$.
However, we have the next proposition.

\begin{theorem}
For any integer $n\ge 2$, there exists a knot with $r(K)\ge n$ which bounds an essential once punctured Klein bottle.
\end{theorem}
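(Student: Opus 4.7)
The plan is to construct, for each $n\ge 2$, a knot $K_n$ as a simple closed curve on the boundary of a knotted genus-two handlebody $H_n\subset S^3$ such that $K_n$ bounds an essential once-punctured Klein bottle inside $H_n$ and every compressing disk of $\partial H_n$ meets $K_n$ in at least $n$ points.

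I would take $H_n$ to be a regular neighborhood of a knotted handcuff graph in $S^3$, chosen so that the exterior $S^3\setminus H_n$ is $\partial$-irreducible, which is achieved by picking the spine graph to have hyperbolic exterior. Then every compressing disk of the closed surface $\Sigma_n:=\partial H_n$ is, up to isotopy, a meridian disk of $H_n$. Inside $H_n$, I would embed a once-punctured Klein bottle $P_n$ as an essential proper surface with $\partial P_n=K_n\subset\Sigma_n$, arranged so that $\partial P_n$ lies at large distance from the disk set of $H_n$ in the curve complex of $\Sigma_n$; this guarantees that its geometric intersection with every meridian of $H_n$ is at least $n$.

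Given such a pair $(H_n,P_n)$, the representativity bound $r(K_n)\ge r(\Sigma_n,K_n)\ge n$ is immediate. Essentiality of $P_n$ in $S^3\setminus K_n$ is then checked by standard innermost-circle and innermost-arc surgeries on $D\cap\Sigma_n$ for a hypothetical compressing or $\partial$-compressing disk $D$ of $P_n$: an innermost circle on the exterior side gives a compressing disk of $\Sigma_n$ from outside, contradicting the $\partial$-irreducibility of $S^3\setminus H_n$; after removing all such circles, $D$ lies entirely in $H_n$, contradicting the assumed essentiality of $P_n$ in $H_n$. A parallel innermost-arc argument handles boundary-compressions.

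The main obstacle is the combinatorial construction of $P_n$ with the required boundary curve. Essential once-punctured Klein bottles in $H_n$ correspond to conjugacy classes of injections $\pi_1(P_n)\hookrightarrow \pi_1(H_n)$ of the rank-two free group into itself of the appropriate topological type, and the boundary $\partial P_n$ realises the relator word $a^2b^2$ read in the image of the injection. By choosing the injection to be deep enough — concretely, taking the subgroup generated by two long words whose lengths (in any free basis) grow with $n$ — one arranges that the relator word, viewed as a curve on $\Sigma_n$, lies arbitrarily far from every meridian, while $\pi_1$-injectivity, and hence incompressibility of $P_n$ in $H_n$, is preserved. Translating this distance estimate into the required geometric intersection lower bound is the step that will require the most care, and I would carry it out using Nielsen theory for $\mathrm{Aut}(F_2)$ together with the quasi-convexity of the disk set in the curve complex of $\Sigma_n$.
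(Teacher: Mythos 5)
Your overall skeleton matches the paper's: put $K$ on the boundary of a genus-two handlebody, span it by a once-punctured Klein bottle inside the handlebody, make the exterior $\partial$-irreducible so that all compressing disks of the boundary surface are meridian disks, and then bound $|K\cap\partial\Delta|$ from below over all essential disks $\Delta$. However, the one step that carries all the content --- actually producing an \emph{embedded} once-punctured Klein bottle in the genus-two handlebody whose boundary meets every essential disk at least $n$ times --- is exactly the step you defer, and the route you sketch for it does not work as stated. First, an injection $\pi_1(P_n)\hookrightarrow\pi_1(H_n)$ (a rank-two subgroup of a rank-two free group, with the boundary reading $a^2b^2$ in the image) gives at best a $\pi_1$-injective \emph{map}, not a properly embedded surface with embedded boundary curve; embeddedness of both $P_n$ and $K_n=\partial P_n$ is essential here and is not controlled by the algebra. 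Second, you ask for much more than is needed: you want $\partial P_n$ to be at large curve-complex distance from the disk set, and you give no argument that boundaries of embedded essential once-punctured Klein bottles in a genus-two handlebody can realize arbitrarily large such distance (this is far from clear). Note also that the direction you flag as delicate --- converting large distance into large intersection number --- is the easy implication; the genuinely hard part is the construction itself, which remains missing.

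For comparison, the paper avoids all of this with an explicit construction: inside an unknotted genus-two handlebody $V$ it takes a M\"obius band $M$ (a non-separating disk plus a band) and attaches a band $B$ whose core follows a train track with weights $m,n$, getting an embedded once-punctured Klein bottle $F=M\cup B$ with $K=\partial F$. The quantitative input is combinatorial rather than curve-complex-theoretic: $K$ is $\min\{2m,2n+2\}$-seamed with respect to a complete disk system of $V$, and an elementary cut-and-paste lemma shows a $k$-seamed curve meets the boundary of \emph{every} essential disk in $V$ in at least $2k$ points (a statement about intersection numbers with all disks, with no need for a distance bound). Re-embedding $V$ knottedly with $\partial$-irreducible exterior then kills outside compressions, exactly as in your outline. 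If you want to salvage your approach, you should replace the $\pi_1$-injection/distance program with some explicit embedded construction of this kind, or else prove (which the proposal does not) that embedded once-punctured Klein bottles in a genus-two handlebody exist with boundary arbitrarily far from the disk set.
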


\begin{proof}
Let $V\cup_F W$ be a genus two Heegaard splitting of $S^3$.
Take a loop $C$ on $\partial V$ as shown in Figure \ref{counterexample2}.
Note that $C$ bounds a M\"{o}bius band $M$ properly embedded in $V$ which is formed by a non-separating disk and a band.
Let $A$ be a loop obtained from a train track $T$ on $\partial V$ as shown in Figure \ref{counterexample2}, where $m,\ n\ge 1$.
By adding a band $B$ along $A$ to $M$, we obtain a once punctured Klein bottle $F=M\cup B$ properly embedded in $V$ and a knot $K=\partial F$.

It is easy to see that $K$ is {\em $\min\{2m,2n+2\}$-seamed} with respect to a complete set of essential disks $\{D_1,D_2,D_3\}$ in $V$, that is, $K$ has been isotoped to intersect $\bigcup \partial D_i$ minimally and for each pair of pants $P$ obtained from $\partial V$ by cutting along $\bigcup \partial D_i$, and for each pair of two boundary components of $P$, there exist at least $\min\{2m,2n+2\}$ arcs of intersection in $K\cap P$ that connect that pair of boundary components.
The following lemma can be proved by an elementary cut and paste argument.

\begin{lemma}
If $K\subset \partial V$ is $k$-seamed with respect to a complete set of meridian disks $\{D_i\}$ in $V$ and $\Delta$ be an essential disk in $V$, then $|K\cap \partial \Delta|\ge 2k$.
\end{lemma}

By this lemma, for any compressing disk $\Delta$ for $F$ in $V$, $\partial \Delta$ intersects $K$ at least $2\min\{2m,2n+2\}$ points.

Finally, to obtain a knot $K$ with $r(K)\ge 2\min\{2m,2n+2\}$, we re-embed $V$ in $S^3$ so that $S^3-int V$ is boundary-irreducible.
Then there exists no compressing disk for $F$ in $S^3-int V$, and we have $r(F,K)\ge 2\min\{2m,2n+2\}$.
\end{proof}

\begin{figure}[htbp]
	\begin{center}
	\includegraphics[trim=0mm 0mm 0mm 0mm, width=.8\linewidth]{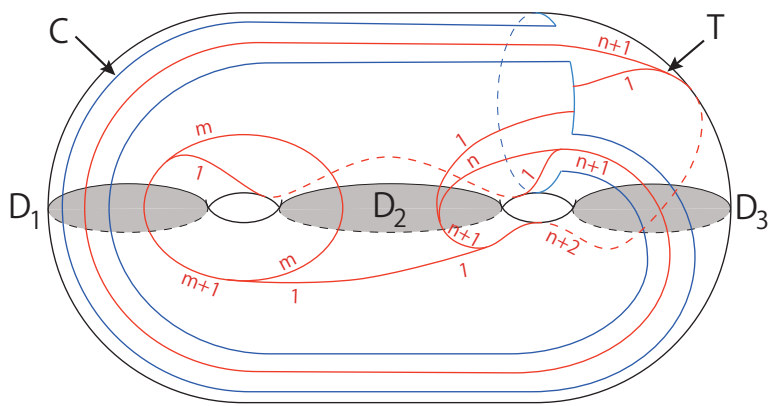}
	\end{center}
	\caption{Constructing a knot $K$ with $r(F,K)\ge n$ which bounds an essential once punctured Klein bottle}
	\label{counterexample2}
\end{figure}

\begin{remark}
We remark that there exists a knot which does not bound an essential non-orientable spanning surface (\cite{D}).
\end{remark}

\bigskip

\noindent{\bf Acknowledgements.}

We would like to thank to Koya Shimokawa for suggesting to consider the Scharlemann--Thompson type thin position.

\bibliographystyle{amsplain}

\end{document}